\newcommand{\BlackBoxes}{\global\overfullrule5pt}
\newtheorem{theorem}{Theorem}[section]
\newtheorem{definition}[theorem]{Definition}
\newtheorem{proposition}[theorem]{Proposition}
\newtheorem{lemma}[theorem]{Lemma}
\newtheorem{remark}[theorem]{Remark}
\newtheorem{example}[theorem]{Example}
\newtheorem{examples}[theorem]{Examples}
\newtheorem{foo}[theorem]{Remarks}
\newtheorem{Pre}[theorem]{}
\newcommand{\R}{{\mathbb R}}
\newcommand{\N}{{\mathbb N}}
\newcommand{\Acal}{{\mathcal A}}
\newcommand{\Ccal}{{\mathcal C}}
\newcommand{\Dcal}{{\mathcal D}}
\newcommand{\Fcal}{{\mathcal F}}
\newcommand{\Gcal}{{\mathcal G}}
\newcommand{\Hcal}{{\mathcal H}}
\newcommand{\Lcal}{{\mathcal L}}
\newcommand{\Mcal}{{\mathcal M}}
\newcommand{\Pcal}{{\mathcal P}}
\newcommand{\Qcal}{{\mathcal Q}}
\newcommand{\Scal}{{\mathcal S}}
\newcommand{\Zcal}{{\mathcal Z}}
\renewcommand\inf{\qopname\relax m{\vphantom{p}inf}}
\begin{document}

\title{Fatou Closedness under Model Uncertainty 
}
\author{Marco Maggis\thanks{Email: \texttt{marco.maggis@unimi.it}}\,,
}
\affil{Department of Mathematics, University of Milan, Italy}
\author{
Thilo Meyer-Brandis\thanks{Email: \texttt{meyerbra@math.lmu.de}},  Gregor Svindland\,\thanks{Email: \texttt{svindla@math.lmu.de}}
}
\affil{Mathematics Institute, LMU Munich, Germany}

\date{February 28, 2018}

\maketitle

\begin{abstract}
We provide a characterization in terms of Fatou closedness for
weakly closed monotone convex sets in the space of $\Pcal$-quasisure
bounded random variables, where $\Pcal$ is a (possibly
non-dominated) class of probability measures. Applications of our results lie within robust versions the Fundamental Theorem of Asset Pricing or dual representation of convex risk measures.
\end{abstract}

\noindent \textbf{Keywords}: capacities, Fatou closedness/property,  sequential order closedness, convex duality under model uncertainty, Fundamental Theorem of Asset Pricing.

\noindent \textbf{MSC (2010):}   31A15, 46A20, 46E30, 60A99, 91B30.

\section{Introduction}

A fundamental result attributed to Grothendieck (\cite[p321, Exercise~1]{Groth}) and based on the Krein-Smulian theorem 
characterizes weak*-closedness of a convex subset of 
$L_P^\infty:=L^\infty(\Omega, \Fcal,P)$, where
$(\Omega,\Fcal,P)$ is a probability space, by means of a property
called Fatou closedness as follows:

\begin{theorem}\label{thm:basic}
Let $\Acal\subset L_P^\infty$ be convex. Equivalent are:
\begin{itemize}
 \item[(i)] $\Acal$ is weak*-closed (i.e.\ closed in $\sigma(L_P^\infty, L_P^1)$).
 \item[(ii)] $\Acal$ is Fatou closed, i.e.\ if $(X_n)_{n\in \N}\subset \Acal$
 is a bounded sequence which converges $P$-almost surely to $X$, then $X\in \Acal$.
\end{itemize}
\end{theorem}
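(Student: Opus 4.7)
For the direction (i) $\Rightarrow$ (ii), I would simply invoke the dominated convergence theorem: if $(X_n)_{n\in\N}\subset\Acal$ is uniformly bounded and $X_n\to X$ $P$-a.s., then $\int X_n Y\,dP\to\int X Y\,dP$ for every $Y\in L_P^1$, which is precisely weak* convergence $X_n\to X$ in $\sigma(L_P^\infty,L_P^1)$, and the weak*-closedness of $\Acal$ forces $X\in\Acal$.

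The substantive direction is (ii) $\Rightarrow$ (i), and here my plan is to rely on the Krein-Smulian theorem, which reduces weak*-closedness of the convex set $\Acal\subset L_P^\infty$ to that of each intersection $\Acal_r:=\Acal\cap\{X:\|X\|_\infty\le r\}$ for $r>0$. Fixing such an $r$, I would take an arbitrary $X$ in the weak* closure of $\Acal_r$ and aim to produce a sequence in $\Acal_r$ converging to $X$ $P$-a.s., to which Fatou closedness can then be applied to conclude $X\in\Acal$. Choosing any net $(X_\alpha)\subset\Acal_r$ with $X_\alpha\to X$ in $\sigma(L_P^\infty,L_P^1)$, I would exploit the inclusion $L_P^2\subset L_P^1$ (valid because $P$ is a probability measure) to deduce that $X_\alpha\to X$ also weakly in $L_P^2$. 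Since $\Acal_r$ is convex, its weak closure in $L_P^2$ coincides with its $L_P^2$-norm closure, so Mazur's lemma will furnish a sequence $(Y_n)\subset\Acal_r$ of convex combinations with $Y_n\to X$ in $L_P^2$-norm; passing to a subsequence yields $P$-a.s.\ convergence, and since $\|Y_n\|_\infty\le r$, Fatou closedness delivers $X\in\Acal$.

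I expect the main obstacle to lie in the interplay between topology and sequences: the weak*-topology on $L_P^\infty$ is in general not metrizable, so the Fatou property (a sequential condition) cannot be applied directly to $\Acal$ as a whole. The Krein-Smulian reduction to norm-bounded sets, combined with the detour through the reflexive Banach space $L_P^2$, where weak and norm closures of convex sets coincide and where sequential a.s.\ approximation becomes available, is the technical device I would use to bridge this gap.
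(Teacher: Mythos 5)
Your proposal is correct and follows essentially the same strategy as the paper: Krein--Smulian reduces the problem to norm-bounded slices, which are then handled by passing to a larger Lebesgue space where, by convexity, weak and norm closures coincide, so that an $P$-a.s.\ convergent subsequence can be extracted and Fatou closedness applied. The only (cosmetic) difference is that you route the argument through $L_P^2$ while the paper uses the inclusion into $(L_P^1,\sigma(L_P^1,L_P^\infty))$ and shows the image of each slice is $L^1$-norm closed before pulling back.
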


Note that $L_P^\infty$ is a Banach lattice (see Section~\ref{notations}) and that from this point of view property~(ii) in Theorem~\ref{thm:basic} equals sequential order closedness of $\Acal$ which in fact implies order closedness since $L_P^\infty$ has the countable sup property, i.e.\ every nonempty subset possessing a supremum contains a countable
subset possessing the same supremum. Theorem~\ref{thm:basic}  is very useful and often applied in the mathematical finance literature such as in the classic proof of the Fundamental Theorem
of Asset Pricing, see e.g.\ \cite{DS94} or \cite{DS06}, or in the
dual representation of convex risk functions, see e.g.\ \cite{FS3}. In
all cases the problem is that the norm dual of $L_P^\infty$
contains undesired singular elements, whereas in the weak*-duality
$(L_P^\infty,\sigma(L_P^\infty,L_P^1))$ the elements of the dual
space are identified with $\sigma$-additive measures. However, as
the weak*-topology is generally not first-countable, verifying that some set
is weak*-closed is typically quite challenging. This is where
Theorem~\ref{thm:basic} proves helpful.

The aim of this paper is to study the existence of a version of
Theorem~\ref{thm:basic} for the case when the probability measure $P$ is
replaced by a class $\Pcal$ of probability measures on
$(\Omega,\Fcal)$. In general this class $\Pcal$ does not allow for
a dominating probability. Applications of such a result lie for
instance in the field of mathematical finance, where currently there is much attention paid to deriving versions of the Fundamental Theorem of Asset
Pricing  as well as dual
representations of convex risk functions in so-called {\em robust} frameworks as studied
in  \cite{B13, Bion,BN13,BFM15,Nu14, V14}.
These kind of frameworks have become increasingly popular to describe a decision maker who has
to deal with the uncertainty which arises from model ambiguity. Here the class of probability models $\Pcal$ the decision maker takes into account represents her degree of ambiguity about the right probabilistic model. If $\Pcal=\{P\}$ there is no ambiguity. In many studies which account for model ambiguity $\Pcal$ in fact turns out to be a non-dominated class of probability measures, see  \cite{Bion,BN13,BFM15,Nu14} and the reference therein.

We will show that  there is a version of
Theorem~\ref{thm:basic} in a robust probabilistic
framework $(\Omega,\Fcal,\Pcal)$, see Theorem~\ref{thm:5}. Let $$c(A):=\sup_{P\in \Pcal} P(A), \quad A\in \Fcal,$$ denote the capacity generated by $\Pcal$. Under some conditions on the convex set $\Acal$ and on $L^\infty_c$ we obtain
equivalence between
\begin{description}
\item[(WC)] $\Acal\subset L^\infty_c$ is
$\sigma(L^\infty_c,ca_c)$-closed, \item[(FC)] $\Acal\subset
L^\infty_c$ is Fatou closed: for any bounded sequence
$\{X_n\}\subset \Acal$ and $X\in L^\infty_c$ such that $X_n\to X$
$\Pcal$-quasi surely we have that  $X\in \Acal$,
\end{description}
where $L^\infty_c$ and $ca_c$ are the robust analogues of
$L^\infty_P$ and $L^1_P$ given by the capacity $c$, respectively, and $\Pcal$ quasi sure
convergence means $Q$-almost sure convergence under each $Q\in
\Pcal$. The conditions we have to require on $\Acal$ are monotonicity ($\Acal=\Acal+(L^\infty_c)_+$) and
a property called $\Pcal$-sensitivity. Monotonicity is
typically satisfied in economic applications, and we
show that $\Pcal$-sensitivity is indeed a necessary condition to
have (WC) $\Leftrightarrow$ (FC), see Proposition~\ref{prop:sensitive}. If $\Pcal$ is dominated,
$\Pcal$-sensitivity is always fulfilled.

Another requirement which is crucial for our proof of (WC) $\Leftrightarrow$ (FC) is that the dual space of $ca_c$ may be identified with $L^\infty_c$. 
This condition implies order completeness of the Banach lattice $L^\infty_c$, i.e.\ the existence of a supremum for any bounded subset of $L^\infty_c$, see Proposition~\ref{prop:esssup}, and it thus corresponds to aggregation type results as in
\cite{Cohen,STZ11}. If $L^\infty_c$ is order complete, then the property (FC) equals sequential order closedness of $\Acal$. However, order completeness does not imply that $L^\infty_c$ possesses the countable sup property, see Example \ref{ex:cs2}, so even under this condition (FC) does in general not imply order closedness of $\Acal$.

We also provide a counter example showing that for
non-dominated $\Pcal$ there is no proof of (WC) $\Leftrightarrow$
(FC) without further requirements such as $\Pcal$-sensitivity, see Example~\ref{ex:non}. Moreover,  we illustrate that many conditions, in particular on $\Pcal$,
one would think of in the first place to ensure (WC) $\Leftrightarrow$ (FC), indeed imply that $\Pcal$ is dominated, so we are back to Theorem~\ref{thm:basic}.
Hence, a further contribution
of this paper is to provide a deeper insight into the fallacies
one might encounter when attempting to extend
Theorem~\ref{thm:basic} to a robust case.

The paper is structured as follows: Section~\ref{notations}
provides a list of useful notations which will be used throughout
the paper. Section~\ref{robustFatou} contains the main
results of the paper, and in particular Theorem~\ref{thm:5} is the
robust version of Theorem~\ref{thm:basic}.  Finally, applications
of Theorem~\ref{thm:5} in the field of mathematical finance 
are collected in Section~\ref{applications}. Here we do not assume that the reader is familiar with
mathematical finance. However, we try to keep the presentation concise, referring to the relevant literature for more background information.

\section{Notation}\label{notations}

For the sake of clarity we propose here a list of the basic
notations and definitions that we shall use throughout this paper.

Let $(\Omega,\Fcal)$ be any measurable space.

\begin{enumerate}

\item $ba:=\{\mu:\Fcal\to \R\mid \mbox{$\mu$ is finitely
additive}\}$ and $ca:=\{\mu:\Fcal\to \R\mid \mbox{$\mu$ is
$\sigma$-additive}\}$. These are both Banach lattices once endowed
with the total variation norm $TV$ and $|\mu|=\mu^++\mu^-$
where $\mu=\mu^+-\mu^-$ is the Jordan decomposition (see
\cite{AB06} for further details).

\item $ba_+$ (resp. $ca_+$) is the set of all positive additive
(resp. $\sigma$-additive) set functions on $(\Omega,\Fcal)$.

\item In absence of any reference probability measure we have the
following sets of random variables
\begin{eqnarray*}
\Lcal & := & \{f:\Omega\to \R\mid f\, \mbox{is}\, \mbox{$\Fcal$-measurable}\},
\\ \Lcal_+ & := & \{f\in \Lcal\mid f(\omega)\geq 0\ , \forall\, \omega\in\Omega\},
\\ \Lcal^\infty & := & \{f\in \Lcal\mid \mbox{$f$ is bounded}\}.
\end{eqnarray*}
In particular $\Lcal^\infty$ is a Banach space under the
(pointwise) supremum norm $\|\cdot\|_{\infty}$ with dual space
$ba$.

\item $\Mcal_1\subset ca_+$ is the set of all probability measures
on $(\Omega,\Fcal)$.

\item Throughout this paper we fix a set of probability measures
$\Pcal\subset \Mcal_1$.

\item  We introduce the sublinear
expectation $$c(f):=\sup_{Q\in \Pcal}E_Q[f], \quad f\in \Lcal_+$$
and by some abuse of notation we define
the capacity $c(A):=c(1_A)$ for
$A\in \Fcal$.

\item  Let $\widehat \Pcal,\widetilde\Pcal\subset \Mcal_1$. $\widehat \Pcal$ dominates $\widetilde\Pcal$, denoted by $\widetilde\Pcal
\ll \widehat \Pcal $, if for all $A\in \Fcal$: $$\sup_{P\in \widehat\Pcal} P(A)=0 \quad \Rightarrow \quad  \sup_{P\in \widetilde \Pcal} P(A)=0 .$$
We say that two classes $\widehat\Pcal$ and $\widetilde\Pcal$
are equivalent, denoted by $\widehat \Pcal \thickapprox \widetilde\Pcal$,
if $\widetilde\Pcal\ll \widehat\Pcal $ and $\widehat\Pcal\ll \widetilde \Pcal$.

\item A statement holds $\Pcal$-quasi surely (q.s.) if the
statement holds $Q$-almost surely (a.s.) for any $Q\in \Pcal$.

\item The space of finitely additive (resp. countably additive)
set functions dominated by $c$ is given by $ba_c=\{\mu\in ba \mid
\mu \ll c \}$ (resp. $ca_c=\{\mu\in ca \mid \mu \ll c \}$). Here
$\mu\ll c$ means: $c(A)=0$ for
some $A\in \Fcal$ implies $\mu(A)=0$.
\\When $\Pcal=\{Q\}$ we shall write $ba_Q$ or $ca_Q$ for the sake of
simplicity.

\item We consider the quotient space $L_c:=\Lcal_{/\sim}$ where the
equivalence is given by
$$f\sim g\quad \Leftrightarrow \quad
\forall P\in \Pcal: P(f=g)=1 .
$$
We shall use capital letters to distinguish equivalence classes of
random variables $X\in L_c$ from a representative $f\in X$, with
$f\in \Lcal$. In case $\Pcal=\{Q\}$ we
shall write $L_Q^1$ instead of $L_c$. It is a well-known consequence of the Radon-Nikodym theorem (\cite[Theorem~13.18]{AB06}) that $ca_Q$ may be identified with $L_Q^1$.

\item For any $f,g\in\Lcal$ and $P\in\Mcal_1 $, we write $f\leq g$
$P$-a.s.\ if and only if $P(f\leq g)=1$. Similarly $f\leq g$ $\Pcal$-q.s.\ if
and only if $f\leq g$ $P$-a.s. for all $P\in \Pcal$. This relation is a partial
order on $\Lcal$ and  it also induces a partial order on $L_c$ where $X\leq Y$ for $X,Y\in L_c$ if and only if  $f\leq g$ $\Pcal$-q.s.\ for any $f\in X$ and $g\in Y$.

\item We define $L_c^\infty:=\Lcal^\infty_{/\sim}$ and endow this space with the norm
$$\|X\|_{c,\infty}:=\inf\{m\mid \forall P\in \Pcal:P(|X|\leq
m)=1\}.$$ $(L_c^\infty, \|\cdot\|_{c,\infty})$ is a Banach lattice
with the same partial order $\leq$ as on $L_c$. Its norm dual is $ba_c$.
 In case $\Pcal=\{Q\}$ we
shall write $L_Q^\infty$ and
$\|\cdot\|_{Q,\infty}$ for the sake of simplicity. Note that $\|\cdot\|_{c,\infty}$ is never order continuous for any choice of $\Pcal$.

\end{enumerate}

For simplicity of presentation, if there is no risk of confusion, we will follow the usual convention of identifying random variables in $\Lcal$ with the equivalence classes they induce (in $L_c$, $L^\infty_c$, $L^1_Q$ or $L_Q^\infty$) and vice versa.

\section{Towards a robust version of
Theorem~\ref{thm:basic}}\label{robustFatou}

We start by recalling the proof of the non-trivial implication (ii) $\Rightarrow$ (i) of Theorem~\ref{thm:basic}:
the idea is to apply the Krein-Smulian theorem which implies that
we only need to show that the sets $$C_K:=\Acal\cap \{X\in L^\infty_P\mid
\|X\|_{P,\infty} \leq K\}$$ are weak*-closed for any constant
$K>0$. Now we could invoke the countable sup property of $L^\infty_P$ to find that (ii) implies (i), see e.g.\ \cite[Definition 1.43 and following discussion]{AB03}.  But as, in the robust setting we envisage, $L^\infty_c$ typically does not possess this property (see for instance Example~\ref{ex:cs2}), we present an alternative argument by means of the following inclusion: 
\begin{equation}\label{trick}i:(L_P^\infty,\sigma(L_P^\infty,L_P^1))\to
(L^1_P,\sigma(L^1_P,L_P^\infty))\end{equation} Note that $i$ is continuous. Now,
as $\Acal$ is Fatou closed, i.e.\ closed under bounded $P$-a.s.\
convergence, it follows that $i(C_K)$ is a closed subset of the
Banach space $(L^1_P, E_P[|\cdot|])$, and thus $i(C_K)$ is also
weakly (i.e.\ $\sigma(L^1_P,L_P^\infty)$) closed by convexity, so
eventually $C_K$ must be weak*-closed by continuity of $i$. 

A natural approach to prove a robust version of
Theorem~\ref{thm:basic} is to 'robustify' the spaces $L^1_P$ and
try to repeat the argument above. There are two natural candidates
for this: Let $H_c:=\{X\in L\mid c(|X|)<\infty \}$, with norm
$\|X\|_c:=c(|X|)$. Then it is readily verified that
$(H_c,\|\cdot\|_c)$ is a Banach lattice. But in the robust case
there is also another candidate, namely
$M_c:=\overline{L^\infty_c}^{\|\cdot\|_c}$ which is also a Banach
lattice with the norm $\|\cdot\|_c$. These spaces have recently
been studied in the literature, see e.g. \cite{DHP11} and
\cite{Nu14}, since they appear as natural environments to embed
financial modelling under uncertainty. Clearly, $L_c^\infty\subset
M_c\subset H_c\subset L_c$. Note that the trick with the inclusion
\eqref{trick} requires that the norm dual of $L^1_P$ can be
identified with $L^\infty_P$, so in particular with a subset of
$L^1_P$ where in this latter case $L^1_P$ is viewed as a
representation of $ca_P$. Thus the reader may readily check that
we could save the above argument if the norm duals $M_c^\ast$ and
$H_c^\ast$ of $M_c$ and $H_c$, respectively, would satisfy
$M_c^\ast\subset ca$ or $H_c^\ast\subset ca$. The following
Theorem~\ref{thm:dual:sigmaadd} shows that this is the case only
if $\Pcal$ is dominated. To this end, denote by
\begin{equation}\label{Z}\Zcal:=\{ (A_n)_{n\in \N}\subset \Fcal\mid
A_n\downarrow \emptyset\; \mbox{and}\; c(A_n)\not \to 0\},
\end{equation} where $A_n\downarrow \emptyset$ means that $A_n\supset A_{n+1}$, $A_n\neq \emptyset$,
$n\in \N$, and $\bigcap_{n\in \N}A_n=\emptyset$, the decreasing sequences of sets on which $c$ is not
continuous.

\begin{theorem}\label{thm:dual:sigmaadd} Consider the following conditions:
\begin{enumerate}
\item $\Zcal=\emptyset$. \item $M_c^\ast\subset ca$. \item
$H_c^\ast\subset ca$.
\end{enumerate}
Then (i) $\Longleftrightarrow$ (ii) $\Longleftarrow$ (iii).

In particular, if $\Zcal=\emptyset$, then there exists a countable
subset $\widetilde \Pcal\subset \Pcal$ such that $\widetilde
\Pcal\thickapprox \Pcal$, and thus there is a probability measure
$Q\in \Mcal_1$ such that $\{Q\}\thickapprox \Pcal $.
\end{theorem}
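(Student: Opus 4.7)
The plan is to handle the three implications in turn and then establish the ``in particular'' claim. For (i) $\Rightarrow$ (ii), given $\mu \in M_c^*$, I would examine the set function $\tilde{\mu}(A) := \mu(1_A)$: linearity of $\mu$ makes it finitely additive, while the operator bound $|\mu(1_A)| \le \|\mu\|_{M_c^*}\,\|1_A\|_c = \|\mu\|_{M_c^*}\, c(A)$ together with (i) yields $\tilde{\mu}(A_n) \to 0$ whenever $A_n \downarrow \emptyset$. Hence $\tilde{\mu}$ is a $\sigma$-additive measure on $\Fcal$, and the integral against $\tilde{\mu}$ agrees with $\mu$ on simple functions, then on $L^\infty_c$ by density, and finally on $M_c$ by continuity, so $\mu$ is identified with an element of $ca$.

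For (ii) $\Rightarrow$ (i), I would argue contrapositively. Given $(A_n)\in\Zcal$, pass to a subsequence where $c(A_n) \ge \varepsilon > 0$ and select $P_n \in \Pcal$ with $P_n(A_n) \ge \varepsilon/2$. Each $P_n$ yields a norm-one functional $\mu_n \in M_c^*$ via $\mu_n(X) := E_{P_n}[X]$, thanks to $|E_{P_n}[X]| \le E_{P_n}[|X|] \le c(|X|) = \|X\|_c$. Fixing a Banach limit $L$ on $\ell^\infty(\N)$, set $\mu(X) := L\bigl((\mu_n(X))_n\bigr)$; linearity of $L$ and of each $\mu_n$ makes $\mu$ linear, and $|\mu(X)| \le \|X\|_c$ puts $\mu$ in $M_c^*$. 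Since $A_n \subseteq A_k$ for $n \ge k$, one has $\mu_n(1_{A_k}) \ge P_n(A_n) \ge \varepsilon/2$, hence $\mu(1_{A_k}) \ge \varepsilon/2$ for every $k$, blocking $\sigma$-additivity of $\mu$ along $A_k \downarrow \emptyset$. The proof of (iii) $\Rightarrow$ (i) is identical, except that $\mu_n$ and the Banach-limit functional $\mu$ are viewed on the larger space $H_c$ (the expectation is well-defined there because $c(|X|) < \infty$); combined with (i) $\Rightarrow$ (ii), this delivers (iii) $\Rightarrow$ (ii).

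For the ``in particular'' claim, denote by $\Pcal^\sigma$ the set of countable convex combinations $\sum_n \lambda_n P_n$, $P_n \in \Pcal$, and for $\mu \in \Pcal^\sigma$ put
$$h(\mu) := \sup\{c(A) : A \in \Fcal,\ \mu(A) = 0\}, \qquad h^* := \inf_{\mu \in \Pcal^\sigma} h(\mu).$$
Any infimizing sequence $(\mu_n)$ with $h(\mu_n) \to h^*$ combines into $\mu^* := \sum_n 2^{-n}\mu_n \in \Pcal^\sigma$, whose null-ideal is $\bigcap_n N(\mu_n)$, so $h(\mu^*) = h^*$. If $h^* = 0$, then $Q := \mu^*$ is equivalent to $\Pcal$, and expanding $\mu^* = \sum_n \lambda_n P_n$ gives the countable $\widetilde\Pcal \subset \Pcal$ claimed. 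Everything therefore hinges on ruling out $h^* > 0$.

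Assume $h^* > 0$. Inside $N(\mu^*)$ I would perform a disjoint exhaustion: with $\alpha_k := \sup\{c(B) : B \in N(\mu^*),\ B \cap \bigcup_{j<k} B_j = \emptyset\}$, choose pairwise disjoint $B_k \in N(\mu^*)$ with $c(B_k) \ge \alpha_k/2$. If $\alpha_k \not\to 0$, the tails $\bigcup_{k \ge n} B_k$ form a decreasing sequence with empty intersection (by disjointness) and $c$-mass bounded below, directly contradicting $\Zcal = \emptyset$; hence $\alpha_k \to 0$, which forces $A := \bigcup_k B_k$ to be essentially maximal in $N(\mu^*)$ (any $A' \in N(\mu^*)$ obeys $c(A' \setminus A) \le \alpha_k \to 0$) with $c(A) = h^*$. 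Picking $P_0 \in \Pcal$ with $P_0(A) > h^*/2$ and setting $\mu^{(1)} := \tfrac12(\mu^* + P_0) \in \Pcal^\sigma$, one has $N(\mu^{(1)}) \subsetneq N(\mu^*)$ yet $h(\mu^{(1)}) = h^*$ by the infimum property. Re-running the inner exhaustion inside $N(\mu^{(1)})$ yields $A^{(1)} \subseteq A$ with $c(A^{(1)}) = h^*$, and because $P_0(A^{(1)}) = 0$ while $P_0(A) > h^*/2$, $c(A \setminus A^{(1)}) > h^*/2$. Iterating produces a decreasing chain $A \supseteq A^{(1)} \supseteq A^{(2)} \supseteq \cdots$ with $c(A^{(n)} \setminus A^{(n+1)}) > h^*/2$, and the annular sets $A^{(n)} \setminus \bigcap_m A^{(m)}$ decrease to the empty set while each has $c$-mass $\ge h^*/2$, once again violating $\Zcal = \emptyset$. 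This twofold use of $\Zcal = \emptyset$ --- in the inner exhaustion and in the outer iteration extracting the strictly decreasing chain of essentially maximal sets --- is the subtlest part of the proof.
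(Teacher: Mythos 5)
Your proposal is correct, but in two of its three components it takes a genuinely different route from the paper. For (i) $\Rightarrow$ (ii) you essentially reproduce the paper's argument (define $\tilde\mu(A)=\mu(1_A)$, get $\sigma$-additivity from $c(A_n)\to 0$ along $A_n\downarrow\emptyset$, extend from simple functions by density); the only breezy point is the final extension ``to $M_c$ by continuity'': to know that $X\mapsto \int X\,d\tilde\mu$ is finite and $\|\cdot\|_c$-continuous on all of $M_c$ one needs the domination $|\tilde\mu|(A)\le \|\mu\|^\ast c(A)$ plus a truncation step, which the paper handles via the characterization $M_c=\{X\in H_c: \|X1_{\{|X|\ge n\}}\|_c\to 0\}$ and monotone convergence --- a small but real piece of bookkeeping you should add. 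For (ii) $\Rightarrow$ (i) and (iii) $\Rightarrow$ (i) the paper argues directly (Proposition~A.2): assuming the dual consists of $\sigma$-additive measures, $1_{A_n}\to 0$ weakly, Mazur's lemma gives norm-convergent convex combinations, and the lattice norm then forces $\|1_{A_n}\|_c\to 0$; you instead argue contrapositively, using a Banach limit of the functionals $E_{P_n}[\,\cdot\,]$ with $P_n(A_n)\ge \varepsilon/2$ to exhibit an explicit element of $M_c^\ast$ (resp.\ $H_c^\ast$) that cannot be $\sigma$-additive. Both use Hahn--Banach-strength tools; yours has the merit of producing a concrete pathological functional, the paper's of proving the quantitative statement $\|1_{A_n}\|_c\to 0$ which it reuses elsewhere. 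For the ``in particular'' part the paper (Proposition~A.1) proves a uniform absolute-continuity claim ($\forall \varepsilon\,\exists P_1,\dots,P_n,\delta$) by building, from its failure, a decreasing sequence contradicting $\Zcal=\emptyset$, and then assembles the countable family from the finite blocks; you instead run a Halmos--Savage-type exhaustion: minimize $h(\mu)=\sup\{c(A):\mu(A)=0\}$ over countable mixtures $\Pcal^\sigma$, and rule out $h^\ast>0$ by two appeals to $\Zcal=\emptyset$ (disjoint exhaustion of the null ideal to produce an essentially maximal null set of capacity $h^\ast$, then an iteration adding new priors whose annuli have capacity $>h^\ast/2$). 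This is a valid alternative; the only cosmetic repairs are replacing $A^{(n+1)}$ by $A^{(n+1)}\cap A^{(n)}$ (maximality only gives $c(A^{(n+1)}\setminus A^{(n)})=0$, not literal inclusion) and noting subadditivity of $c$ where you compare $c(A^{(n)})$ with $h^\ast$. Your scheme yields the dominating countable family somewhat less constructively than the paper's $\varepsilon$-$\delta$ blocks, but it isolates more clearly where $\Zcal=\emptyset$ enters.
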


\begin{proof}
(i) $\Rightarrow$ (ii): By Proposition~\ref{prop:1} for any $l\in
M_c^\ast$ there is $\mu\in ca$ such that $l(X)=\int X\, d\mu$ for
all simple random variables $X$. Moreover, $\mu\in ca_c$, because
$c(A)=0$ implies $l(1_A)=0$, $A\in \Fcal$. Since for any $X\in
L^\infty_c$ and any $n\in \N$ by the usual approximation method
from integration theory there is a simple random variable $X_n$ such that
$|X-X_n|<1/n$ $\Pcal$-q.s., so $\|X-X_n\|_c<1/n$, continuity of
$l$ and the dominated convergence theorem yield $$l(X)=\lim_{n\to
\infty}l(X_n)=\lim_{n\to \infty}\int X_n\, d\mu=\int X\, d\mu$$
for all $X\in L^\infty_c$. We recall that in
\cite{DHP11}~Proposition~18 the following relation was shown
$$M_c= \{X\in H_c\mid \lim_{n\to \infty}\|X1_{\{|X|\geq n\}}\|_c=0 \}.$$
Hence, for $X\in (M_c)_+$ we have by
monotone convergence that $$l(X)=\lim_{n\to \infty}l(X1_{\{|X|\leq
n\}})= \lim_{n\to \infty}\int X1_{\{|X|\leq n\}}\, d\mu = \int X\,
d\mu. $$ Finally, decomposing $X\in M_c$ into $X^+-X^-$ with
$X^+,X^-\in (M_c)_+$ and linearity of $l$ and the integral shows
(ii).

\smallskip\noindent
(ii) $\Rightarrow$ (i) and (iii)  $\Rightarrow$ (i) follow
directly from Proposition~\ref{prop:1}

\smallskip\noindent
The last statement of this theorem is Proposition~\ref{prop:3}.
\end{proof}

\begin{remark}
Note that $\Zcal=\emptyset$ is equivalent to sequential order continuity of $\|\cdot\|_c$. According to Theorem~\ref{thm:dual:sigmaadd},  if $\Pcal$ is not dominated, then $\Zcal\neq \emptyset$ and hence the norm $\|\cdot\|_c$ on $M_c$ or $H_c$ is not order continuous.

Also note that the converse of the last statement of
Theorem~\ref{thm:dual:sigmaadd} is not true, i.e.\ $\Zcal\neq
\emptyset$ does not imply that $\Pcal$ is not dominated. To see
this, let $A_n\downarrow \emptyset$ and pick a sequence of probability measures $P_n$ such that
$P_n(A_n)=1$ for all $n\in \N$, and let $\Pcal=\{P_n\mid n\in
\N\}$. Then, clearly $\|1_{A_n}\|_c=1$ for each $n$. Hence, $\|\cdot\|_c$ is not order continuous and 
$\Zcal\neq \emptyset$ and thus $M_c^\ast\not \subset ca$. However,
we have that $\{Q\}\thickapprox \Pcal$ for
$Q=\sum_{n=1}^\infty\frac{1}{2^n}P_n$.
\end{remark}

Recall the conditions
\begin{description}
\item[(WC)] $\Acal\subset L^\infty_c$ is
$\sigma(L^\infty_c,ca_c)$-closed. \item[(FC)] $\Acal\subset
L^\infty_c$ is Fatou closed: for any bounded sequence $X_n\subset
\Acal$ and $X\in L^\infty_c$ such that $X_n\to X$ $\Pcal$-q.s.\ we
have that  $X\in\Acal$.
\end{description}

It is easily verified that always (WC) $\Longrightarrow$ (FC)
since any bounded $\Pcal$-q.s.\ converging sequence also
converges in $\sigma(L^\infty_c,ca_c)$ to the same limit. However,
there is in general no proof of (FC)
$\Longrightarrow$ (WC) even if $\Acal$ is convex, and also requiring monotonicity of $\Acal$, i.e.\  $\Acal+(L^\infty_c)_+=\Acal$, in addition is not sufficient:

\begin{theorem}\label{thm:4} Let $\Acal\subset L^\infty_c$ be convex and monotone. Without further assumptions on $\Pcal$ or $\Acal$, there exists
no proof of (FC) $\Rightarrow$ (WC).
\end{theorem}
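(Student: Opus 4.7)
The plan is to prove Theorem~\ref{thm:4} by exhibiting a concrete counter-example: a probability model $(\Omega,\Fcal,\Pcal)$ together with a convex monotone $\Acal\subset L^\infty_c$ which satisfies (FC) but fails (WC). The very existence of such an $\Acal$ refutes any purported general implication (FC) $\Rightarrow$ (WC) under only convexity and monotonicity. (The explicit counter-example is presented later as Example~\ref{ex:non}; what follows is the guiding strategy.)

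A first observation is that $\Pcal$ must be genuinely non-dominated: if some $Q\in\Mcal_1$ dominated $\Pcal$, then $L^\infty_c=L^\infty_Q$ and $ca_c$ is (canonically isomorphic to) $L^1_Q$, so Theorem~\ref{thm:basic} would yield (FC) $\Leftrightarrow$ (WC) for any convex $\Acal$ without any use of monotonicity. Conversely, by Theorem~\ref{thm:dual:sigmaadd} the non-dominated case always satisfies $(L^\infty_c)^\ast=ba_c\supsetneq ca_c$, so there exist $\|\cdot\|_{c,\infty}$-continuous linear functionals on $L^\infty_c$ which are not represented by any countably additive measure, and it is precisely this gap that we will exploit.

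The guiding construction is to pick a positive $\phi\in ba_c\setminus ca_c$ that happens to be sequentially continuous along bounded $\Pcal$-q.s.\ convergent sequences, and to set
\begin{equation*}
\Acal:=\{X\in L^\infty_c\mid \phi(X)\geq 0\}.
\end{equation*}
Positivity and linearity of $\phi$ make $\Acal$ monotone and convex; sequential continuity of $\phi$ along bounded $\Pcal$-q.s.\ limits is exactly (FC). Failure of (WC) then amounts to showing that $\phi$ is not $\sigma(L^\infty_c,ca_c)$-continuous (which is expected because $\phi\notin ca_c$), and can be witnessed by exhibiting a net in $\Acal$ that $\sigma(L^\infty_c,ca_c)$-converges to some $X\in L^\infty_c\setminus\Acal$.

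The main obstacle is the construction of the functional $\phi$ itself. In the dominated setting such a $\phi$ simply cannot exist: bounded $\Pcal$-q.s.\ convergence coincides there with bounded $Q$-a.s.\ convergence, and the dominated convergence theorem forces every sequentially continuous linear functional on $L^\infty_Q$ to be represented by an element of $ca_Q=ca_c$. In the non-dominated case, by contrast, $\Pcal$-q.s.\ convergence is strictly weaker than pointwise (or single-measure a.s.) convergence, and a linear functional may be ``blind'' to modifications on sets which are $\Pcal$-polar yet carry a purely finitely additive mass. A natural recipe is to take an uncountable family of mutually singular measures in $\Pcal$ and to combine a $ca_c$-integration with a finitely additive charge concentrated, in a $ba_c$-sense, on sets ignored by every single $P\in\Pcal$. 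Verifying simultaneously that such a $\phi$ lies in $ba_c$, is positive, is Fatou continuous, and fails to lie in $ca_c$ is the technical heart of the counter-example.
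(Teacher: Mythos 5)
Your strategy cannot be completed: the functional $\phi$ you defer constructing does not exist, for any choice of $\Pcal$. Suppose $\phi\in (ba_c)_+$ is represented by a positive, finitely additive $\mu$ with $\mu(\Omega)=1$, and suppose the half-space $\Acal=\{X\in L^\infty_c\mid \int X\,d\mu\geq 0\}$ satisfies (FC). If $\mu$ were not $\sigma$-additive, then (since for positive finitely additive set functions on a $\sigma$-algebra, $\sigma$-additivity is equivalent to continuity from above at $\emptyset$) there would be sets $A_n\downarrow \emptyset$ with $\lim_n\mu(A_n)=\delta>0$. The sequence $X_n:=1_{A_n}-\delta/2$ is bounded, lies in $\Acal$ because $\int X_n\,d\mu=\mu(A_n)-\delta/2\geq \delta/2>0$, and converges pointwise everywhere --- hence $\Pcal$-q.s. --- to the constant $-\delta/2\notin\Acal$, contradicting (FC). Hence $\mu\in ca\cap ba_c=ca_c$ and the half-space is automatically $\sigma(L^\infty_c,ca_c)$-closed. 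In other words, for half-spaces cut out by positive elements of $ba_c$ the implication (FC) $\Rightarrow$ (WC) always holds; and a fortiori no positive $\phi\in ba_c\setminus ca_c$ can be sequentially continuous along bounded $\Pcal$-q.s.\ convergent sequences, since applying the above to $1_{A_n}\to 0$ already forces countable additivity. The ``technical heart'' you postpone is therefore not merely difficult but impossible, and the counterexample cannot have the form $\{\phi\geq 0\}$.

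The paper's actual route is structurally different and explains why the theorem is phrased as ``there exists no proof'' rather than ``the implication is false''. In Example~\ref{ex:non} one takes, under the continuum hypothesis, $\Omega=[0,1]$ with the power set, $\Pcal$ the family of all Dirac measures, and for $\Acal$ the sequential Fatou closure of a convex set built from indicators of countable sets. Every sequence from this set is supported on a single countable set, so $\pm 1$ is never reached by bounded q.s.\ convergent sequences; but by Banach--Kuratowski every $\mu\in ca_c=ca$ is a countable sum of weighted Diracs, so the \emph{net} of indicators of countable sets converges to $1$ in $\sigma(L^\infty_c,ca_c)$. The separation is thus achieved by the countable/uncountable dichotomy and a net, not by a discontinuous linear functional, and it requires CH --- whence only the non-provability of (FC) $\Rightarrow$ (WC) is asserted. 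Your proposal, besides resting on a non-existent object, also aims at an outright counterexample in plain set theory, which is more than the paper claims or delivers.
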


The proof of Theorem~\ref{thm:4} is given by the following
Example~\ref{ex:non} where we give a counter-example of (FC)
$\Longrightarrow$ (WC) assuming the continuum hypothesis. So under
the continuum hypothesis (FC) $\Longrightarrow$ (WC) is indeed
wrong. Note that as the continuum hypothesis does not conflict
with what one perceives as standard mathematical axioms, there is
of course no way to prove (FC) $\Longrightarrow$ (WC) even if we
do not believe in the continuum hypothesis.

\begin{example}\label{ex:non} Consider the measure space $(\Omega,\Fcal)=([0,1],
\mathfrak{P}([0,1])$, where $\mathfrak{P}([0,1])$ denotes the
power set of $[0,1]$. Assume the continuum hypothesis. Banach
and Kuratowski have shown that for any set $I$ with the same cardinality
as $\R$ there is no measure $\mu$ on $(I,\mathfrak{P}(I))$ such
that $\mu(I)=1$ and $\mu(\{\omega\})=0$ for all $\omega\in I$; see
for instance \cite[Theorem~ C.1]{dudley}. It follows that any
probability measure $\mu$ over $(\Omega,\Fcal)$ must be a
countable sum of weighted Dirac-measures, i.e.\
$\mu=\sum_{i=1}^\infty a_i\delta_{\omega_i}$ where $a_i\geq 0$,
$\sum_{i=1}^na_i=1$, $\omega_i\in \Omega$, $i\in \N$. (Recall that
for $\omega\in \Omega$ and $A\in \Fcal$: $\delta_\omega(A)=1$ if
and only if $\omega\in A$ and $\delta_\omega(A)=0$ otherwise.)
Indeed, let $\mu\in \Mcal_1$, and let $$S:=\{\omega\in \Omega\mid
\mu(\{\omega\})>0\}.$$ Then $S$ can at most be countable (consider
the sets $S_n:=\{\omega\in \Omega\mid \mu(\{\omega\})>1/n\}$,
$n\in \N$, and note that $S=\bigcup_{n\in \N}S_n$). Now suppose
that $\mu([0,1]\setminus S)>0$, then as $[0,1]\setminus S$ has the
same cardinality as $[0,1]$, this implies the existence of an atom
for the measure $\mu$ restricted to $[0,1]\setminus S$, i.e.\
there exists $\hat\omega\in [0,1]\setminus S$ such that
$$\frac{1}{\mu([0,1]\setminus S)}\mu(\{\hat\omega\})>0.$$ This
clearly contradicts the definition of $S$.

\smallskip\noindent
Let $\Pcal:=\{\delta_\omega\mid \omega\in [0,1]\}$ be the set of
all Dirac measures.  Then $$c(|X|)=\sup_{\omega\in
[0,1]}|X(\omega)|,$$ so it turns out that
$L^\infty_c=M_c=H_c=\Lcal^\infty$. Hence,
$(L^\infty_c)^\ast=M_c^\ast=H_c^\ast=ba$, and, as $c(A)=0$ is
equivalent to $A=\emptyset$, we also have that $ca_c=ca$. Consider
the set $$C:=\{1_A \mid \emptyset\neq A \subset [0,1]\;\mbox{is countable}\},$$
and let $\Acal$ be the convex closure of $C$ under bounded
$\Pcal$-q.s.\ convergence of sequences. Then $1\not\in \Acal$: Indeed, any
$X=\sum_{i=1}^na_i1_{A_i}$, $a_i\geq 0$, $\sum_{i=1}^na_i=1$,
$1_{A_i}\in C$, in the convex hull of $C$ satisfies $0\leq X\leq
1_{A_X}$ where $A_X:=\bigcup_{i=1}^nA_i$ is countable. Let $X_k$
be any sequence in the convex hull of $C$, then $0\leq  X_k\leq
1_B$, $k\in \N$, where $B:=\bigcup_{k\in \N}A_{X_k}$ is countable.
Hence, $X_k(\omega)=0$ for all $\omega\in [0,1]\setminus B$, so
$1\not\in \Acal$. Now consider the family $\Gcal$ of all countable
subsets of $[0,1]$ directed by $A\leq B$ if and only if $A\subset
B$. Consider the net $\{1_{A}\mid A\in \Gcal\}\subset C$. Then for
any probability measure $\mu$ there is $A\in \Gcal$ (namely $A=S$)
such that for all $B\in \Gcal$ with $B\geq A$ we have $\int1_B\,
d\mu=1=\int 1\,d\mu$.  Thus $1$ lies in the
$\sigma(L^\infty_c,ca_c)$-closure of $\Acal$.

In order to make the presentation simpler, we did not require monotonicity of $\Acal$ so far, but the same arguments as above show that
if $\Acal$ is the convex closure of $-C+(L^\infty_c)_+$ under bounded
$\Pcal$-q.s.\ convergence of sequences, which is convex and monotone, then $-1\not \in  \Acal$ but $-1$ is an element of the $\sigma(L^\infty_c,ca_c)$-closure of $\Acal$.
\end{example}

A consequence of Theorem~\ref{thm:4} is that we need to ask for
additional properties on $\Acal$ in order to have (FC)
$\Longleftrightarrow$ (WC).

\subsection{$\Pcal$-sensitivity, $ca_c^\ast=L^\infty_c$, and (FC) $\Longleftrightarrow$ (WC)}\label{(FC)and(WS)}

A simple property on $\Acal$ which allows to prove (FC)
$\Longleftrightarrow$ (WC) is to require that the convex
set $\Acal\subset L^\infty_c$ behaves as in the dominated case,
i.e.\ there is a reference probability $P\in \Pcal$ such that
$\Acal$ is closed under bounded $P$-a.s.\ convergence. Under this
assumption the whole issue can be reduced to
Theorem~\ref{thm:basic}. Clearly, this assumption is too strong.
However, it gives the idea of the $\Pcal$-sensitivity property we
will introduce in the following.

Given a probability $Q\in \Mcal_1$ such that $\{Q\}\ll \Pcal$ we
define the linear map $j_Q:L^{\infty}_c \to L^{\infty}_Q$ by
$ Q(j_Q(X)=X)=1,
$
i.e.\ $j_Q(X)$ is the equivalence class in $L^\infty_Q$ such that
any representative of $j_Q(X)$ and any representative of $X$ are
$Q$-a.s.\ identical. As $ca_Q$ (which can be identified with
$L^1_Q$) is a subset of $ca_c$, we deduce that $j_Q:
(L^\infty_c,\sigma(L^\infty_c, ca_c))\to (L^ \infty_Q,\sigma(L^
\infty_Q,L^1_Q))$  is continuous.

\begin{definition}\label{sensitive} A set $\Acal\subset L^\infty_c$ is called  \emph{$\Pcal$-sensitive} if there exists a set $\Qcal\subset \Mcal_1$ with  $\Qcal\ll \Pcal$ such
that \begin{equation*}\label{eq:psensitive} \mbox{$j_Q(X)\in
j_Q(\Acal)$ for all $Q\in \Qcal$ implies $X\in \Acal$}
\end{equation*} or equivalently $$\Acal = \bigcap_{Q\in\Qcal}
j^{-1}_Q\circ j_Q(\Acal).$$ The set $\Qcal$ will be called
\emph{reduction set} for $(\Acal,\Pcal)$.
\end{definition}

\begin{remark} Suppose that $\Pcal$ is dominated. Then the Halmos Savage
lemma (see \cite{HS49}, Lemma 7) guarantees the existence  of a
countable subclass $\{P_i\}_{i=1}^{\infty}$ such that
$\{P_i\}_{i=1}^{\infty}\thickapprox \Pcal$. Let $P=\sum
\frac{1}{2^i}P_i$. Then $\Pcal\thickapprox \{P\}$, so the space
$L^{\infty}_c$ can be identified with $L^{\infty}_{ P}$. Hence, in
that case any set $\Acal\subset L^\infty_c$  is automatically
$\Pcal$-sensitive with reduction set $\Qcal=\{P\}$.
\end{remark}

\begin{example}
The set $\Acal$ of Example~\ref{ex:non} is not $\Pcal$-sensitive. Since $c(A)=0$ implies that $A=\emptyset$,
any set of probabilities $\Qcal\subset \Pcal$ satisfies $\Qcal\ll \Pcal$.
Let $Q\in\Mcal_1$ be arbitrary and $S:=\{\omega\in [0,1]\mid Q(\{\omega\})>0\}$ such that  $Q=\sum_{\omega\in S}a_\omega\delta_{\omega}$ with $a_\omega>0$ and $\sum_{\omega\in S}a_\omega=1$. Then
$1_S\in \Acal$ by definition of $\Acal$ and thus $1\in
j_Q(\Acal)$, or to be more precise, $1$ and $1_S$ form the same equivalence class in $L^\infty_Q$. Since $Q\in \Mcal_1$ was arbitrary, we have $1\in
\bigcap_{Q\in \Qcal}j_Q^{-1}\circ j_Q(\Acal)$. As we know that
$1\not\in \Acal$, the set $\Acal$ is not $\Pcal$-sensitive.
\end{example}

Indeed $\Pcal$-sensitivity is a necessary condition for (FC)
$\Longleftrightarrow$ (WC).

\begin{proposition}\label{prop:sensitive}
Any convex set $\Acal\subset L^\infty_c$ which is
$\sigma(L^\infty_c,ca_c)$-closed (i.e.\ satisfies (WC)) is
$\Pcal$-sensitive.
\end{proposition}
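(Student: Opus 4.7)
The plan is to take as the reduction set
$$\Qcal := \{Q \in \Mcal_1 \mid Q \ll c\}.$$
First I would check that $\Qcal \ll \Pcal$: if $c(A)=\sup_{P\in \Pcal}P(A)=0$ and $Q\in \Qcal$, then $Q(A)=0$ by definition of $ca_c$, so $\Qcal$ is an admissible candidate. The inclusion $\Acal \subset \bigcap_{Q \in \Qcal} j_Q^{-1}(j_Q(\Acal))$ is immediate from $X\in\Acal \Rightarrow j_Q(X)\in j_Q(\Acal)$, so everything comes down to the reverse inclusion, which I would prove by contraposition.

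Assume $X\notin\Acal$. Since $\Acal$ is convex and $\sigma(L_c^\infty,ca_c)$-closed, Hahn--Banach separation in the dual pair $(L_c^\infty, ca_c)$ yields $\mu\in ca_c$ and $\alpha\in\R$ with
$$\int X\,d\mu \;>\; \alpha \;\geq\; \sup_{Y\in\Acal}\int Y\,d\mu.$$
In particular $\mu\neq 0$. I would then produce a single witnessing probability by normalizing the total variation: set $Q:=|\mu|/|\mu|(\Omega)$. Since $|\mu|=\mu^++\mu^-$ inherits absolute continuity with respect to $c$ from $\mu\in ca_c$, we have $Q\in\Qcal$. If one had $j_Q(X)\in j_Q(\Acal)$, there would exist $Y\in\Acal$ with $X=Y$ $Q$-a.s., hence $|\mu|$-a.s., and thus $\int X\,d\mu=\int Y\,d\mu\leq \alpha$, contradicting the separating inequality. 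Therefore $j_Q(X)\notin j_Q(\Acal)$, showing $X\notin \bigcap_{Q\in\Qcal}j_Q^{-1}(j_Q(\Acal))$.

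The only subtle step is the passage from the separating functional $\mu\in ca_c$ to the single probability $Q\in \Qcal$ needed to verify $\Pcal$-sensitivity. A naive approach that normalizes $\mu^+$ and $\mu^-$ separately produces two different witnesses $Y_\pm\in\Acal$ agreeing with $X$ under $Q_\pm:=\mu^\pm/\mu^\pm(\Omega)$, and these cannot be combined to contradict the separating inequality without extra structure on $\Acal$. Using $|\mu|$ instead bundles both parts of the Jordan decomposition into a single probability, so that one witness $Y$ inherits agreement with $X$ simultaneously $\mu^+$-a.s.\ and $\mu^-$-a.s., which is exactly what is needed to collide with the Hahn--Banach strict inequality.
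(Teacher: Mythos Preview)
Your proof is correct and follows essentially the same route as the paper: separate $X\notin\Acal$ from $\Acal$ by some $\mu\in ca_c$, then normalize via $Q=|\mu|/|\mu|(\Omega)$ so that $j_Q(X)=j_Q(Y)$ forces $\int X\,d\mu=\int Y\,d\mu$. The only cosmetic differences are that the paper packages the separation step through the Fenchel--Moreau theorem applied to the indicator $\delta(\cdot\mid\Acal)$ and takes as reduction set the normalized measures in the effective domain of the conjugate, whereas you invoke Hahn--Banach directly and use the universal set $\{Q\in\Mcal_1\mid Q\ll c\}$; both choices lead to the same key computation.
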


\begin{proof} If $\Acal=\emptyset$ or $\Acal= L^{\infty}_c$, the assertion is trivial.
Now assume that $\Acal\neq \emptyset$ and $\Acal\neq
L^{\infty}_c$. As $\Acal$ is $\sigma(L^\infty_c,ca_c)$-closed and
convex, the function
$$\rho(X):=\delta(X\mid \Acal):=
\begin{cases}
0 & \mbox{if} \, X\in \Acal \\ \infty &\mbox{else}
\end{cases}
,\quad X\in L^\infty_c,$$ is convex and
$\sigma(L^\infty_c,ca_c)$ lower-semicontinuous. Hence, by the
Fenchel-Moreau theorem (see \cite[Proposition~4.1]{EKETEM}) there exists
a dual representation of $\rho$, i.e.\
$$\rho(X)=\sup_{\mu\in \Qcal}\left\{\int X\, d\mu-\rho^\ast(\mu)\right\}$$ where
$\Qcal:=\{\mu\in ca_c\mid \rho^\ast(\mu)<\infty\}$ is a convex set
and
$$\rho^\ast(\mu):=\sup_{X\in \Acal}\int X\, d\mu, \quad \mu\in
ca_c .$$
$\Acal\neq L^{\infty}_c$ implies $\Qcal\supsetneqq
\{0\}$ and therefore,
$$\Acal=\bigcap_{\mu\in \Qcal}\left\{X\in L^\infty_c\mid \int X\,
d\mu\leq \rho^\ast(\mu)\right\}=\bigcap_{\mu\in \Qcal\setminus
\{0\}}\left\{X\in L^\infty_c\mid \int X\, d\mu\leq
\rho^\ast(\mu)\right\}.
$$ Let $\tilde \Qcal:=\{\frac{|\mu|}{|\mu|(\Omega)} \mid \mu\in
\Qcal\setminus \{0\}\}\subset \Mcal_1$ and note that $\tilde
\Qcal\ll \Pcal$ since $\Qcal\subset ca_c$. Consider $$X\in
\bigcap_{Q\in \tilde \Qcal}j_Q^{-1}\circ j_Q(\Acal).$$ Fix $Q\in
\tilde \Qcal$ and $\nu\in \Qcal$ such that
$Q=\frac{|\nu|}{|\nu|(\Omega)}$. Then, $j_Q(X)\in j_Q(\Acal)$,
i.e.\ there is $Y\in \Acal$ such that $j_Q(X)=j_Q(Y)$. Noting that
$X=j_Q(X)$ and $Y=j_Q(Y)$ under $\nu$, it follows that $$\int X\,
d\nu=\int j_Q(X)\, d\nu=\int j_Q(Y)\, d\nu=\int Y\, d\nu\leq
\rho^\ast(\nu),$$ where the inequality follows from $Y\in \Acal$.
Since $Q\in \tilde \Qcal$ was arbitrary, we conclude that indeed
$\int X\, d\mu \leq \rho^\ast(\mu)$ for all $\mu\in \Qcal$, and
hence that $X\in \Acal$. This shows that $\bigcap_{Q\in
\Qcal}j_Q^{-1}\circ j_Q(\Acal)\subset \Acal$. The other inclusion
$\bigcap_{Q\in \Qcal}j_Q^{-1}\circ j_Q(\Acal)\supset \Acal$ is
trivially satisfied, so we have that $\Acal$ is $\Pcal$-sensitive
with reduction set $\tilde \Qcal$.
\end{proof}

The following Theorem~\ref{thm:5} gives conditions under which
(FC) $\Longleftrightarrow$ (WC) for a convex set $\Acal\subset
L^\infty_c$. Besides $\Pcal$-sensitivity we have to require that
the norm dual $ca_c^\ast$ of $(ca_c, TV)$, where $TV$ denotes the
total variation norm on $ca_c$, may be identified with
$L^\infty_c$. Clearly any $X\in L^\infty_c$ may be identified with a continuous
linear functional on $ca_c$ by \begin{equation}\label{bal}ca_c\ni
\mu\mapsto \int X \,d\mu,\end{equation} so we always have
$L^\infty_c\subset ca_c^\ast$. However, $ ca_c^\ast=L^\infty_c$ is obviously a
very strong condition which we will characterise in Proposition~\ref{prop:esssup} in terms of order closedness of $L^\infty_c$.

\begin{theorem}\label{thm:5} Suppose that $ca_c^\ast=L^\infty_c$ and let $\Acal\subset L^\infty_c$ be convex and monotone ($\Acal+(L^\infty_c)_+=\Acal$). Equivalent are
\begin{enumerate}
 \item $\Acal$ satisfies (WC).
 \item $\Acal$ is $\Pcal$-sensitive and satisfies (FC).
\end{enumerate}

\end{theorem}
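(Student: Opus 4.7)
The direction (1) $\Rightarrow$ (2) is immediate: (WC) implies (FC) because any bounded, $\mathcal{P}$-quasi sure convergent sequence converges in $\sigma(L^\infty_c, ca_c)$ by dominated convergence applied measure-by-measure (each $|\mu| \in ca_c$ satisfies $|\mu| \ll c$), and (WC) implies $\mathcal{P}$-sensitivity by Proposition~\ref{prop:sensitive}.

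For (2) $\Rightarrow$ (1), the hypothesis $ca_c^* = L^\infty_c$ identifies $\sigma(L^\infty_c, ca_c)$ with the weak*-topology on the dual of the Banach space $(ca_c, TV)$. I would apply the Krein--Smulian theorem to reduce the problem to showing that $C_K := \mathcal{A} \cap \{\|X\|_{c,\infty} \leq K\}$ is $\sigma(L^\infty_c, ca_c)$-closed for each $K > 0$. Next I would exploit $\mathcal{P}$-sensitivity: let $\mathcal{Q}$ be a reduction set for $(\mathcal{A}, \mathcal{P})$, so $\mathcal{A} = \bigcap_{Q \in \mathcal{Q}} j_Q^{-1}(j_Q(\mathcal{A}))$. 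Each $j_Q$ is $\sigma(L^\infty_c, ca_c)$-to-$\sigma(L^\infty_Q, L^1_Q)$ continuous because $L^1_Q \hookrightarrow ca_c$ via $g \mapsto g\, dQ$. Hence, provided each $j_Q(\mathcal{A})$ is $\sigma(L^\infty_Q, L^1_Q)$-closed in $L^\infty_Q$, the preimages $j_Q^{-1}(j_Q(\mathcal{A}))$ are $\sigma(L^\infty_c, ca_c)$-closed in $L^\infty_c$, and so is their intersection $\mathcal{A}$.

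Showing $j_Q(\mathcal{A}) \subset L^\infty_Q$ is weak*-closed amounts, via the classical Theorem~\ref{thm:basic}, to checking that $j_Q(\mathcal{A})$ is convex (trivial, by linearity of $j_Q$), monotone (any $Z \in (L^\infty_Q)_+$ lifts to a pointwise non-negative element of $L^\infty_c$, whence $\mathcal{A} + (L^\infty_c)_+ = \mathcal{A}$ transfers), and Fatou closed in $L^\infty_Q$.

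Fatou closedness of $j_Q(\mathcal{A})$ is the main obstacle. Given a uniformly bounded sequence $Y_n \in j_Q(\mathcal{A})$ with $Y_n \to Y$ $Q$-a.s., I pick lifts $X_n \in \mathcal{A}$ with $j_Q(X_n) = Y_n$ and replace them by $X_n \vee (-K) \in \mathcal{A}$ (using monotonicity), so that $X_n \geq -K$ in $L^\infty_c$; one cannot symmetrically truncate above without leaving $\mathcal{A}$, so the $X_n$ are in general not uniformly bounded in $L^\infty_c$, and $Q$-a.s.\ convergence of $Y_n$ does not directly yield $\mathcal{P}$-q.s.\ convergence of any modification of $X_n$. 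Bridging this gap is the technical heart of the proof: I expect to combine convexity (passage to Mazur/Koml\'os-type convex combinations whose $j_Q$-images still converge $Q$-a.s.\ while remaining uniformly bounded in $L^\infty_c$), monotonicity, and the order completeness of $L^\infty_c$ afforded by $ca_c^* = L^\infty_c$ (Proposition~\ref{prop:esssup}, allowing formation of essential suprema/infima of bounded families) to exhibit a uniformly bounded sequence in $\mathcal{A}$ converging $\mathcal{P}$-q.s.\ to some $X$ with $j_Q(X) = Y$. At that point (FC) of $\mathcal{A}$ delivers $X \in \mathcal{A}$ and therefore $Y = j_Q(X) \in j_Q(\mathcal{A})$, completing the reduction.
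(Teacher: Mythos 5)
There is a genuine gap, and it sits exactly where you say it does: the claim that $j_Q(\Acal)$ is Fatou closed (hence weak*-closed) in $L^\infty_Q$ is never proved, only hoped for via ``Mazur/Koml\'os-type convex combinations'' and order completeness. That hope does not materialize: Koml\'os/Mazur arguments are per-measure statements, so convex combinations whose $j_Q$-images converge $Q$-a.s.\ give you no control $\Pcal$-quasi surely, and since lifts $X_n\in\Acal$ of a $\|\cdot\|_{Q,\infty}$-bounded sequence need not be bounded in $\|\cdot\|_{c,\infty}$ (monotonicity only lets you truncate from below, as you note), neither (FC) nor Proposition~\ref{prop:esssup} can be brought to bear. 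In fact your reduction asks for more than the theorem needs: closedness of the full set $j_Q(\Acal)$ in $L^\infty_Q$ is not established anywhere in the paper and is not what the argument should target.

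The structural fix is to keep the Krein--Smulian truncation in play rather than dropping it after stating it. Work only with $C_K:=\Acal\cap\{\|X\|_{c,\infty}\le K\}$ and show that $j_Q(C_K)$ (viewed in $L^1_Q$) is norm closed: any $Y_n\in j_Q(C_K)$ has, \emph{by definition of the image}, a lift $X_n\in C_K$, hence $\|X_n\|_{c,\infty}\le K$ automatically. Passing to a $Q$-a.s.\ convergent subsequence, let $F:=\{X_n\to X\}$ (representatives), so $Q(F)=1$, and set $\widetilde X_n:=X_n1_F+K1_{F^c}$. Then $\widetilde X_n\ge X_n$, so $\widetilde X_n\in\Acal$ by monotonicity, $\|\widetilde X_n\|_{c,\infty}\le K$, and $\widetilde X_n\to X1_F+K1_{F^c}=:\widetilde X$ $\Pcal$-q.s.; now (FC) gives $\widetilde X\in C_K$ and $j_Q(\widetilde X)=Y$ since $Q(F)=1$. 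Convexity upgrades norm closedness of $j_Q(C_K)$ in $L^1_Q$ to $\sigma(L^1_Q,L^\infty_Q)$-closedness, hence $j_Q(C_K)$ is $\sigma(L^\infty_Q,L^1_Q)$-closed, and continuity of $j_Q$ makes $A_{Q,K}:=j_Q^{-1}\circ j_Q(C_K)\cap\{\|X\|_{c,\infty}\le K\}$ closed in $\sigma(L^\infty_c,ca_c)$. $\Pcal$-sensitivity enters only at the last step, to show $\bigcap_{Q\in\Qcal}A_{Q,K}\subset C_K$, so that $C_K=\bigcap_{Q\in\Qcal}A_{Q,K}$ is closed and Krein--Smulian (legitimate because $ca_c^\ast=L^\infty_c$ realizes $\sigma(L^\infty_c,ca_c)$ as a weak*-topology) concludes. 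Your direction (1) $\Rightarrow$ (2) is fine and matches the paper.
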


\begin{proof} We already know that (WC) implies (FC) and $\Pcal$-sensitivity.
Now assume that $\Acal$ is $\Pcal$ sensitive and satisfies (FC).
Since $ca_c^\ast=L^\infty_c$, by the Krein-Smulian theorem it is sufficient to show that
$C_K:=\Acal\cap \{Z\in L^\infty\mid\|Z\|_{c,\infty}\leq K\}$ is
$\sigma(L^\infty_c,ca_c)$-closed for every $K>0$. Let
$\Qcal$ be a reduction set for $(\Acal,\Pcal)$ and fix any $K>0$ and $Q\in\Qcal$.

\smallskip\noindent
Consider the continuous inclusion $$i: (L^\infty_Q,\sigma(L^
\infty_Q,L^1_Q))\to (L^1_Q,\sigma(L^1_Q,L^{\infty}_Q)).$$ In a first step we show
that $C_{Q,K}:=i\circ j_Q(C_K)$ is
$\|\cdot\|_Q:=E_Q[|\cdot|]$-closed in $L^1_Q$, because being
convex it then follows that $C_{Q,K}$ is
$\sigma(L^1_Q,L^{\infty}_Q)$-closed and therefore $j_Q(C_K)$ is
$\sigma(L^{\infty}_Q,L^1_Q)$-closed by continuity of $i$. To this
end let $(Y_n)_{n\in \N}\subset C_{Q,K}$ and $Y\in L^1_Q$ such that
$\|Y_n-Y\|_Q\to 0$, and without loss of generality we may also assume that
$Y_n\to Y$ $Q$-a.s. Note that $Y$ is necessarily bounded by $K$. Choose $X_n\in C_K$ such that $Y_n=j_Q(X_n)$
for all $n\in \N$ and $X\in L^\infty_c$ such that $Y=j_Q(X)$.
Consider now the set $$F:=\{\omega\in \Omega\mid
X_n(\omega)\rightarrow X(\omega)\}$$ (by the usual abuse of notation, in the
definition of $F$ we still write $X_n$ and $X$ for arbitrary representatives of
the equivalence classes $X_n$ and $X$).  By monotonicity of
$\Acal$ we have that $\widetilde{X}_n:= X_n 1_F + K1_{F^c}\in C_K$
for all $n\in \N$, and $ \widetilde{X}_n\rightarrow X 1_F
+K1_{F^c}=:\widetilde X$ $\Pcal$-q.s. Consequently $\widetilde X
\in C_K$ and since $Q(F)=1$ we have $Y=j_Q(X)=j_Q(\widetilde X)\in
C_{Q,K}$. Hence, $j_Q(C_{K})$ is $\sigma(L^\infty_Q,L^{1}_Q)$
closed.

\smallskip\noindent
By continuity of $j_Q$, the preimage $j_Q^{-1}\circ j_Q(C_{K})$ is
$\sigma(L^\infty_c,ca_c)$-closed, and as also $\{X\mid
\|X\|_{c,\infty}\leq K\}$ is $\sigma(L^\infty_c,ca_c)$-closed, we
conclude that $$A_{Q,K}:= j_Q^{-1}\circ j_Q(C_{K}) \cap \{X\mid
\|X\|_{c,\infty}\leq K\} \supset C_K$$ and finally also
$\bigcap_{Q\in \Qcal} A_{Q,K}$ are
$\sigma(L^\infty_c,ca_c)$-closed.  Clearly, $\bigcap_{Q\in \Qcal}
A_{K,Q}\supset C_K$. If we can show $\bigcap_{Q\in \Qcal}
A_{Q,K}\subset C_K$, then we are done, because then $\bigcap_{Q\in \Qcal} A_{Q,K}=
C_K$, and thus $C_K$ is $\sigma(L^\infty_c,ca_c)$-closed.
To this end, let $X\in
\bigcap_{Q\in \Qcal} A_{Q,K}$. Then $j_Q(X)\in j_{Q}(\Acal)$ for
any $Q\in \Qcal$ and therefore $X\in \Acal$ by
$\Pcal$-sensitivity. Moreover by definition of $A_{K, Q}$ we also have
$\|X\|_{c,\infty}\leq K$.
\end{proof}

Note that Theorem~\ref{thm:5} proves the so-called $C$-property introduced and discussed in \cite{BF09} for convex and monotone sets.

Let $\Dcal\subset L^\infty_c$. Recall that a supremum of  $\Dcal$
is a least upper bound of $\Dcal$, that is an $X\in L^\infty_c$
such that $Y\leq X$ for all $Y\in \Dcal$, and any
$Z\in L^\infty_c$ such $Y\leq Z$ for all $Y\in
\Dcal$ satisfies $X\leq Z$. The supremum of
$\Dcal$ is denoted by $\operatorname{ess\, sup}_{Y\in \Dcal}Y$. This notation is commonly used in probability theory and it is inspired by the tradition of identifying random variables with the equivalence classes they induce. Indeed for a set of random variables in $\Lcal^\infty$, a supremum in the $\Pcal$-q.s.\ order is only essentially unique---thus called essential supremum ($\operatorname{ess\, sup}$)---in the sense that the equivalence class generated by it in $L^\infty_c$ is unique.  

{\color{red} Note that in the version of this paper published in Positivity the following Proposition~\ref{prop:esssup} contains a flaw. Indeed the proof only shows the following:}

\begin{proposition}\label{prop:esssup}
If $ca_c^\ast=L^\infty_c$ then $L^\infty_c$ is order complete, i.e.\ there exists a supremum for any norm bounded set $\Dcal\subset L^\infty_c$. Conversely, if $L^\infty_c$ is order complete and if the order continuous dual \cite[Definition~1.3.8]{Me91} of $L^\infty_c$ may be identified with $ca_c$, then $ca_c^\ast=L^\infty_c$.
\end{proposition}

\begin{proof}  Recall that $ca$ and thus also
$ca_c$ is an AL-space (\cite[Theorem~10.56]{AB06})

Suppose that $L^\infty_c$ is order complete and that its order continuous dual may be identified with $ca_c$. Then $L^\infty_c$ is in particular also monotonically complete in the sense of \cite[Definition~2.4.18]{Me91}. Thus \cite[Theorem~2.4.22]{Me91} applies which inconjunction with \cite[Theorems~9.22 and 9.33]{AB06} yields $ca_c^\ast=L^\infty_c$.

\smallskip\noindent
In order to prove that $ca_c^\ast=L^\infty_c$ implies the
existence of a supremum for any norm bounded set
$\Dcal\subset L^\infty_c$, note that as $ca_c$ is an AL-space, $ca_c^\ast$ is an AM-space (\cite[Theorem~9.27]{AB06}). In
particular $ca_c^\ast$ is order complete.
Here, the order $\geq_\ast$ on
$ca^\ast_c$ is given by $l\geq_\ast 0$ if and only if $l(\mu)\geq
0$ for all $\mu \in (ca_c)_+$, and a set $\Scal\subset ca_c^\ast$
is order bounded from above if there is $h\in ca_c^\ast$ such that
$ h-l\geq _\ast 0$ for all $l\in \Scal$.  Any norm bounded $\Dcal\subset
L^\infty_c$ is order bounded from above in $ca_c^\ast$, because
$K\mu(\Omega)-\int X\, d\mu\geq 0 $, $\mu\in (ca_c)_+$, for a
constant $K>0$ which is an upper bound of the norm on $\Dcal$, so
$(\mu \mapsto K\mu (\Omega))\in ca_c^\ast$ is an upper bound with
respect to $\geq _\ast$. Thus there is a least upper bound of
$\Dcal$ viewed as a subset of $ca_c^\ast$. Now suppose that
$ca_c^\ast$ can be identified with $L^\infty_c$. Then this least
upper bound of $\Dcal$ may be identified with an element in $X\in
L^\infty_c$, that is $$\int X\, d\mu \geq \int Y\, d\mu\quad
\mbox{for all $\mu \in (ca_c)_+$ and all $Y\in \Dcal$.}$$
Considering measures $\mu$ of type $1_A dP$ for $P\in \Pcal$ and
$A\in \Fcal$ shows that $X\geq Y$ for all $Y\in
\Dcal$, and $\mu \mapsto \int X\, d\mu$ being the least amongst
the upper bounds of $\Dcal$ in the $\geq_\ast$-order implies that
$X$ is a supremum of $\Dcal$.
\end{proof}

\begin{example}\label{ex:cs2}
Recall Example~\ref{ex:non}. 
Clearly any norm bounded set
$\Dcal\subset L^\infty_c=\Lcal^\infty$ admits an essential
supremum which is simply given by $\omega\mapsto \sup_{Y\in
\Dcal}Y(\omega)$. 
%
Assume the continuum hypothesis. Let $l\in ca_c^\ast$ and define
$X(\omega)=l(\delta_\omega)$, $\omega\in [0,1]$. Then by
linearity, for all $\mu\in ca$ it follows that
$l(\mu)=\sum_{\omega\in S}a_\omega l(\delta_\omega)=\int X\, d\mu$
where $S:=\{\omega\in [0,1]\mid \mu(\{\omega\})>0\}$ and
$a_\omega=\mu(\{\omega\})$, $\omega\in S$. 
 Moreover, it is also readily verified that in this case $L^\infty_c$ does not have the countable sup property.
\end{example}

\section{Applications of Theorem \ref{thm:5}}\label{applications}

\subsection{Dual representation of (quasi-) convex increasing
functionals}
In this section we provide a dual representation of
(quasi-) convex increasing functionals.
Such results are key in the study of robustness of financial risk measures. An exhaustive
introduction to the dual representation of convex risk measures
can be found in \cite{FS3} (see also \cite{DK13} for the
quasiconvex case and \cite{CKT15} for recent developments). To the best of our
knowledge, in presence of model uncertainty, the only result
available in the literature is \cite[Theorem 3.1]{Bion} which is
obtained for the closure of the space of continuous functions
under the norm $\|\cdot\|_c$.

\begin{definition} A function
$f:L^\infty_c\to (-\infty,\infty]$ is

\begin{itemize}
\item  quasiconvex (resp. convex) if for every $\lambda\in [0,1]$
and $X,Y\in L^{\infty}$ we have $f(\lambda X+(1-\lambda)Y)\leq
\max\{X,Y\}$ (resp. $f(\lambda X+(1-\lambda)Y)\leq \lambda f(X)+
(1-\lambda)f(Y)$).

\item $\tau$-lower semicontinuous (l.s.c.) for some topology
$\tau$ on $L^{\infty}_c$   if for every $a\in\R$ the lower level
set $\{X\in L^{\infty}_c\mid f(X)\leq a\}$ is $\tau$-closed.

\item $\Pcal$-sensitive if the lower level sets $\{X\in
L^{\infty}_c\mid f(X)\leq a\}$ are $\Pcal$-sensitive for every
$a\in\R$.

\end{itemize}

\end{definition}

The following Lemma provides a huge class of $\Pcal$-sensitive
functions.

\begin{lemma}\label{sup:representation} Consider a function $f:L^\infty_c\to
[-\infty,\infty]$ such that
\begin{equation}\label{sup_property}f(X)=\sup_{P\in\Qcal}
f_P(j_P(X)),
\end{equation}
for some $\Qcal\subset \Mcal_1$ and $f_P:L^{\infty}_P\rightarrow
[-\infty,\infty]$. If $\mathcal{Q}\ll \mathcal{P}$ then $f$ is
$\Pcal$-sensitive with reduction set $\Qcal$.
\end{lemma}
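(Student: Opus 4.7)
The plan is to unfold the definition of $\Pcal$-sensitivity applied directly to each lower level set $\Acal_a := \{X\in L^\infty_c \mid f(X)\leq a\}$ and verify the characterizing identity $\Acal_a = \bigcap_{Q\in\Qcal} j_Q^{-1}\circ j_Q(\Acal_a)$. The inclusion $\Acal_a \subset \bigcap_{Q\in\Qcal} j_Q^{-1}\circ j_Q(\Acal_a)$ is automatic, so the entire content of the lemma lies in the reverse inclusion.

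For the reverse inclusion, I would fix $X \in \bigcap_{Q\in\Qcal} j_Q^{-1}\circ j_Q(\Acal_a)$. By definition, for every $Q\in\Qcal$ there exists $Y_Q \in \Acal_a$ with $j_Q(X)=j_Q(Y_Q)$. The crucial observation is that each $f_Q$ is a function defined on the quotient space $L^\infty_Q$, so its value depends only on the $Q$-a.s.\ equivalence class; hence
\[
f_Q(j_Q(X)) = f_Q(j_Q(Y_Q)) \leq \sup_{P\in\Qcal} f_P(j_P(Y_Q)) = f(Y_Q) \leq a.
\]
Taking the supremum over $Q\in\Qcal$ on the left yields $f(X) = \sup_{Q\in\Qcal} f_Q(j_Q(X)) \leq a$ via the representation~\eqref{sup_property}, so $X\in\Acal_a$.

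Once this is established, since $\Qcal\ll\Pcal$ by assumption, $\Qcal$ qualifies as a reduction set for each $\Acal_a$, so every lower level set of $f$ is $\Pcal$-sensitive with the common reduction set $\Qcal$, which is exactly the defining property of $f$ being $\Pcal$-sensitive. There is no real obstacle here: the argument is essentially a bookkeeping exercise relying on the fact that the $f_P$ are intrinsically defined on the $P$-quotient and therefore cannot distinguish representatives that agree $P$-a.s. The only point worth flagging is that one must be slightly careful that the witness $Y_Q$ depends on $Q$, but this is harmless because the estimate $f_Q(j_Q(Y_Q))\leq a$ is extracted pointwise in $Q$ before taking the supremum.
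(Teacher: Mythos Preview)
Your proof is correct and follows essentially the same idea as the paper's. The paper phrases the argument via set identities---writing $\Acal_a=\bigcap_{P\in\Qcal}\{X\mid f_P(j_P(X))\leq a\}$ and noting that each factor is invariant under $j_P^{-1}\circ j_P$---whereas you carry out the equivalent element-chase directly; both rest on the same observation that $f_P$ depends only on the $P$-equivalence class.
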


\begin{proof}
From representation \eqref{sup_property} we automatically have
$$\{X\in L^{\infty}_c\mid f(X)\leq a\}=\bigcap_{P\in\Qcal}\{X\in L^{\infty}_c\mid f_P(j_P(X))\leq a\}.$$
As $\{X\in L^{\infty}_c\mid f_P(j_P(X))\leq a\}=
j_P^{-1}\circ j_P\{X\in L^{\infty}_c\mid f_P(j_P(X))\leq a\}$, we conclude that $f$ is $\Pcal$-sensitive with reduction set $\Qcal$.
\end{proof}

\begin{theorem}\label{thm:dual:repr} Assume that $ca_c^\ast=L^\infty_c$. Let $f:L^\infty_c\to
(-\infty,\infty]$ be a  quasiconvex (resp.\ convex), monotone non-decreasing ( $X\leq Y$ $\Pcal$-q.s.\ implies $f(X)\leq f(Y)$) and
$\Pcal$-sensitive function. The following are equivalent:
\begin{itemize}
\item[(i)] $f$ is $\sigma(L^\infty_c,ca_c)$-lower semi continuous.

\item[(ii)] $f$ has the Fatou property: for any bounded sequence
$(X_n)_{n\in\N}\subset L^\infty_c$ converging $\Pcal$-q.s.\ to
$X\in L^\infty_c$ we have $f(X)\leq \liminf_{n\to \infty} f(X_n)$.

\item[(iii)] For any sequence $(X_n)_{n\in \N}\subset \Acal$ and $X\in
L^\infty_c$ such that $X_n\uparrow X$ $\Pcal$-q.s.\ we have that
$f(X_n)\uparrow f(X)$.

\item[(iv)] $f$ admits a bidual representation which in the quasiconvex case is
\begin{eqnarray*}
f(X)&= & \sup_{P\in ca_c\cap
\Mcal_1}R\left(E_P[X],P\right), \quad X\in
L^\infty_c,
\end{eqnarray*}
with  dual function $R:\R\times ca_c\to (-\infty,\infty]$
given by
\begin{eqnarray*}
R(t,\mu)&:= & \sup_{t^{\prime}<t}\inf_{Y\in
L^\infty_c}\left\{f(Y)\mid \int Y\, d\mu= t^{\prime} \right\};
\end{eqnarray*}
and in the convex case the dual representation is
\begin{eqnarray*}
f(X)&= & \sup_{\mu\in
(ca_c)_+}\left\{\int X\, d\mu-f^\ast(\mu)\right\}, \quad X\in
L^\infty_c,
\end{eqnarray*}
where the dual function
 $f^{\ast}:ca_c\to (-\infty,\infty]$) is given by
\begin{eqnarray*}
 f^\ast(\mu)&:= & \sup_{Y\in L^\infty_c}\left\{\int Y\, d\mu-
f(Y)\right\}.
\end{eqnarray*}
\end{itemize}
In addition, if $f(X+c)=f(X)+c$ for every $X\in L^{\infty}_c$ and
$c\in \R$ then $f$ is necessarily convex and
$$f(X)=  \sup_{P\in ca_c\cap
\Mcal_1} \left\{E_P[X]-f^\ast(P)\right\}, \quad X\in
L^\infty_c. $$
\end{theorem}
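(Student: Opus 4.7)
The overall strategy is to prove the chain (i)$\,\Rightarrow\,$(ii)$\,\Rightarrow\,$(iii)$\,\Rightarrow\,$(i) using Theorem~\ref{thm:5} applied to the sublevel sets, then establish (i)$\,\Leftrightarrow\,$(iv) via classical biduality, and finally treat the cash-additive case. For (i)$\,\Rightarrow\,$(ii), I would first observe that any norm-bounded $\Pcal$-q.s.\ convergent sequence $X_n \to X$ in fact converges to $X$ in $\sigma(L^\infty_c, ca_c)$: for each $\mu \in ca_c$, $|\mu|\ll c$ implies $X_n \to X$ $|\mu|$-a.s., and since $|X_n|$ is dominated by a constant, dominated convergence yields $\int X_n\,d\mu \to \int X\,d\mu$. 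Lower semicontinuity of $f$ then gives the Fatou inequality in (ii). For (ii)$\,\Rightarrow\,$(iii): if $X_n \uparrow X$ then monotonicity gives $f(X_n)\leq f(X)$ with $(f(X_n))$ non-decreasing, while (ii) supplies the reverse $\liminf$-inequality; together these force $f(X_n)\uparrow f(X)$.

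For (iii)$\,\Rightarrow\,$(i), I would show that each sublevel set $\Acal_a := \{f \leq a\}$ is $\sigma(L^\infty_c, ca_c)$-closed. These sets are convex (by quasiconvexity), satisfy $\Acal_a - (L^\infty_c)_+ \subseteq \Acal_a$ (from $f$ being non-decreasing), and are $\Pcal$-sensitive by hypothesis. The lower-set analogue of Theorem~\ref{thm:5}, whose proof is identical up to replacing $K$ by $-K$ in the truncation step, therefore reduces the task to verifying Fatou closedness. Given $(X_n)\subset \Acal_a$ with $\|X_n\|_{c, \infty}\leq K$ and $X_n\to X$ $\Pcal$-q.s., I may modify representatives on a common $c$-null set so that $|X_n|\leq K$ pointwise. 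Setting $Y_n(\omega) := \inf_{k\geq n} X_k(\omega)$, one has $Y_n \in L^\infty_c$, $Y_n \leq X_k$ for all $k \geq n$ (so $f(Y_n) \leq a$ by monotonicity), and $Y_n \uparrow \liminf_k X_k = X$ $\Pcal$-q.s. Hypothesis (iii) then yields $f(X) = \lim_n f(Y_n)\leq a$, proving (FC) for $\Acal_a$.

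The equivalence (i)$\,\Leftrightarrow\,$(iv) relies on $ca_c^\ast = L^\infty_c$ to put $f$ in the right biduality framework. In the convex case I would apply the Fenchel--Moreau theorem (\cite[Proposition~4.1]{EKETEM}, already invoked in the paper) to obtain $f = f^{\ast\ast}$; monotonicity kills any $\mu\in ca_c$ having a negative part, since $\mu(A) < 0$ gives $f^\ast(\mu) \geq \int (-n 1_A)\,d\mu - f(-n 1_A) \geq -n\mu(A) - f(0) \to \infty$, restricting the sup to $(ca_c)_+$. In the quasiconvex case I would invoke the Volle / Cerreia-Vioglio--Maccheroni--Marinacci--Montrucchio dual representation, producing the stated sup-inf form with dual variables in $ca_c$, and then restrict to $\Mcal_1 \cap ca_c$ via monotonicity and positive homogeneity of $R(t,\mu)$ in $\mu$. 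Conversely, (iv)$\,\Rightarrow\,$(i) is immediate: each representation displays $f$ as a supremum of $\sigma(L^\infty_c, ca_c)$-lower semicontinuous functions of $X$ (affine, or a composition of the continuous $X \mapsto \int X\,d\mu$ with the left-continuous $R(\cdot, \mu)$).

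For the cash-additive case, the standard centering argument gives convexity: $\lambda(X - f(X)) + (1-\lambda)(Y - f(Y)) \in \Acal_0$ by quasiconvexity (both centered variables are at level $0$), and cash-additivity turns this into $f(\lambda X + (1-\lambda)Y) \leq \lambda f(X) + (1-\lambda) f(Y)$. The convex dual representation then collapses onto $\mu\in ca_c\cap \Mcal_1$, because for $\mu\in (ca_c)_+$ with $\mu(\Omega) \neq 1$ one has $\int (X+c)\,d\mu - f(X + c) = \int X\,d\mu - f(X) + c(\mu(\Omega) - 1) \to \infty$ as $c\to \pm\infty$, forcing $f^\ast(\mu) = \infty$. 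The main obstacle, I expect, is arranging the precise quasiconvex formula with the sup-inf structure of $R$ and the restriction to probabilities: Volle-type biduality is standard once $ca_c^\ast = L^\infty_c$ is in hand, but the reduction to $\Mcal_1\cap ca_c$ needs monotonicity to be invoked carefully. Everything else is essentially routine given Theorem~\ref{thm:5}.
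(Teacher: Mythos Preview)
Your proposal is correct and follows essentially the same route as the paper: Theorem~\ref{thm:5} applied to the sublevel sets handles the equivalence of (i)--(iii), and Fenchel--Moreau (resp.\ Penot--Volle) biduality together with monotonicity gives (i)$\Leftrightarrow$(iv). The organization differs only superficially---the paper runs (i)$\Leftrightarrow$(ii) via Theorem~\ref{thm:5} and then (ii)$\Leftrightarrow$(iii) separately, whereas you close the cycle (i)$\Rightarrow$(ii)$\Rightarrow$(iii)$\Rightarrow$(i)---and you are more explicit than the paper about the fact that the sublevel sets of a non-decreasing $f$ are \emph{lower} sets ($\Acal_a-(L^\infty_c)_+\subset\Acal_a$), so that the symmetric version of Theorem~\ref{thm:5} with $-K$ in the truncation is what is actually being invoked.
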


\begin{proof}
According to Theorem~\ref{thm:5} (i) holds if and only if (ii) is
satisfied.
\\ (ii) $\Rightarrow$ (iii) is due to $$f(X)\leq \liminf_{n\to \infty}f(X_n)\leq f(X)$$ where the last ineqaulity follows from monotonicity.
Conversely  (iii) $\Rightarrow $ (ii) follows by considering
$Y_n:=\operatorname{ess\, inf}_{k\geq n}X_k$ and noting that $Y_n\uparrow X$ $\Pcal$-q.s.\ and
$f(Y_n)\leq f(X_n)$; see also \cite[Lemma~4.16]{FS3}.

In the convex case  $(i) \Leftrightarrow (iv)$ is Fenchel's
Theorem (see \cite[Proposition~4.1]{EKETEM}) together with
monotonicity  (see \cite[Corollary~7]{FR02}).

In the quasiconvex case showing $(i) \Rightarrow (iv)$ is a
consequence of the Penot-Volle duality Theorem (see Appendix~\ref{sec:PenotVolle}) and together with monotonicity (see \cite[Lemma 8]{C3M09}), and $(iv) \Rightarrow (iii)$ follows
from  the monotone convergence theorem and the definition of $R$.
\end{proof}

\subsection{Fundamental Theorem of Asset
Pricing}\label{remarksFTAP}

Pricing theory in mathematical finance is based on the Fundamental
Theorem of Asset Pricing, which roughly asserts that in a market
without arbitrage opportunities (the so-called no-arbitrage
condition) discounted prices are expectations under some
risk-neutral probability measure. This characterisation is
essential to develop a pricing theory for financial instruments
which are not traded in the market.  In the classical dominated
framework on some probability space
 $(\Omega,\Fcal,P)$ the risk-neutral probability measures are martingale measures for the discounted price process which are equivalent to the reference probability $P$, see \cite{DS06} for a detailed review and related literature. Also note that the no-arbitrage condition is necessary and sufficient the existence of an economic equilibrium, see e.g.\ \cite{K81}.

It is well understood that the Fundamental Theorem of Asset
Pricing in a classical dominated framework is highly related to
duality arguments. There are also robust approaches applying duality, see e.g.\  \cite{B13} based on an extended order dual space, the so-called super order dual introduced in \cite{AT01}. 
However, most recent studies of robust Fundamental 
Theorems of Asset Pricing do not use duality arguments given
the difficulties we outlined in this paper, see e.g.\ \cite{BN13}.  
However, under the conditions that we have derived in
Section~\ref{robustFatou} we will see that it is possible to
reconcile the Fundamental Theorem of Asset Pricing, the
Superhedging Duality, and duality theory on the pair $(L^\infty_c,
ca_c)$ using the well-known arguments.

Throughout this section we assume that $ca_c^\ast=L^\infty_c$
holds true. We consider a discrete time market model with terminal
time horizont $T\in \mathbb{N}$, and trading times $I:=\left\{
0,...,T\right\} $. The price process is given by a $\Pcal$-q.s.\
bounded $\mathbb{R}^{d}$-valued stochastic process
$S=(S_{t})_{t\in I}=(S^j_{t})_{t\in I}^{j=1,\ldots,d}$ on $(\Omega
,\mathcal{F})$, and we also assume the existence of a numeraire
asset $S_{t}^{0}=1$ for all $ t\in I$. Moreover, we fix a
filtration $\mathbb{F}:=\{\mathcal{F}_{t}\}_{t\in I}$ such that
the process $S$ is $\mathbb{F}$-adapted. Denote by $\mathcal{H}$
the class of $\R^d$-valued, $\mathbb{F}$-predictable stochastic
processes, which is the class of all admissible trading
strategies. Let
$$\Ccal:=\left\{ X\in L^\infty_c\mid X\leq (H\bullet S)_T \;\Pcal\text{-q.s. for some } H\in
\Hcal\right\}$$ where $$(H\bullet
S)_t:=\sum_{k=1}^{t}\sum_{j=1}^{d}H_{k}^{j}(S_{k}^{j}-S_{k-1}^{j})
$$ is the payoff of the self-financing trading strategy at time $t\in I\setminus \{0\}$ with
initial investment $(H\bullet
S)_0=0$ given by the predictable process
$H=(H_t)_{t\in I\setminus\{0\}}$. In this framework the no-arbitrage condition
(NA($\Pcal$)) was introduced by \cite{BN13} as given by the
following definition.
\begin{definition}
The described market model is called arbitrage-free, if it
satisfies the no-arbitrage condition
\begin{description}
\item[NA($\Pcal$)] $(H\bullet S)_T\geq 0$ $\Pcal$-q.s.\ implies
$(H\bullet S)_T= 0$ $\Pcal$-q.s..
\end{description}
\end{definition}

Note that NA($\Pcal$) is equivalent to $\Ccal\cap
(L^\infty_c)_+=\{0\}$.

\begin{lemma}\label{C-closed} Under $NA(\Pcal)$ if $\mathcal{C}$ is
$\Pcal$-sensitive then  $\mathcal{C}$ is
$\sigma(L^{\infty}_c,ca_c)$-closed.
\end{lemma}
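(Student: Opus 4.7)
The plan is to reduce the lemma to an application of Theorem~\ref{thm:5} to the set $\Acal := -\Ccal$; note that $\Ccal$ is $\sigma(L^\infty_c, ca_c)$-closed if and only if $\Acal$ is, since $X\mapsto -X$ is a homeomorphism for this topology. First I would verify the algebraic hypotheses of Theorem~\ref{thm:5} for $\Acal$. Convexity of $\Ccal$, hence of $\Acal$, follows from the linearity of $H\mapsto (H\bullet S)_T$ and the preservation of the $\Pcal$-q.s.\ order under convex combinations. The monotonicity $\Acal + (L^\infty_c)_+ = \Acal$ is equivalent to $\Ccal - (L^\infty_c)_+ = \Ccal$, which is immediate from the defining inequality $X\leq (H\bullet S)_T$: if $X\in \Ccal$ and $Y\in (L^\infty_c)_+$, then $X - Y\leq X\leq (H\bullet S)_T$, so $X-Y\in \Ccal$. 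Finally, $\Pcal$-sensitivity of $\Acal$ follows from the assumed $\Pcal$-sensitivity of $\Ccal$ together with linearity of the maps $j_Q$: if $\Qcal$ is a reduction set for $(\Ccal,\Pcal)$, then
$$\bigcap_{Q\in \Qcal} j_Q^{-1}\circ j_Q(-\Ccal) \;=\; -\bigcap_{Q\in \Qcal} j_Q^{-1}\circ j_Q(\Ccal) \;=\; -\Ccal,$$
so $\Qcal$ is also a reduction set for $(\Acal,\Pcal)$.

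The substantive hypothesis to verify is the Fatou property of $\Ccal$. Given a norm-bounded sequence $(X_n)_{n\in \N}\subset \Ccal$ with $X_n\to X$ $\Pcal$-q.s.\ for some $X\in L^\infty_c$, pick strategies $H^n\in \Hcal$ with $X_n\leq (H^n\bullet S)_T$ $\Pcal$-q.s.\ and set $Z_n := (H^n\bullet S)_T - X_n \geq 0$ $\Pcal$-q.s., so that $X_n = (H^n\bullet S)_T - Z_n$. The task reduces to showing that the $\Pcal$-q.s.\ limit $X$ admits a similar representation. Here I would invoke the robust closedness of the cone of super-hedgeable claims under NA($\Pcal$) established by Bouchard--Nutz~\cite{BN13}: the set of random variables of the form $(H\bullet S)_T - Z$ with $H\in \Hcal$ and $Z\geq 0$ $\Pcal$-q.s.\ is closed under $\Pcal$-q.s.\ convergence of sequences. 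Applied to $(X_n)$, this yields $H\in \Hcal$ and $Z\geq 0$ $\Pcal$-q.s.\ with $X = (H\bullet S)_T - Z$, hence $X\in \Ccal$.

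With all four hypotheses of Theorem~\ref{thm:5} verified, the theorem yields $\sigma(L^\infty_c, ca_c)$-closedness of $\Acal$, and therefore of $\Ccal$. The main obstacle in this plan is the robust closedness of the cone of attainable terminal payoffs under NA($\Pcal$)---a genuinely robust counterpart of Stricker's lemma---proven in \cite{BN13} by backward induction on the trading dates combined with measurable selection, with NA($\Pcal$) being the exact assumption needed to make the one-step attainable cone closed at each date. Once this ingredient is granted, the remainder is structural bookkeeping, and the $\Pcal$-sensitivity hypothesis plays precisely the bridging role identified in Theorem~\ref{thm:5} between Fatou closedness and weak closedness in the absence of a dominating reference measure.
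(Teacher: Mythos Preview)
Your proof is correct and follows essentially the same approach as the paper: invoke Bouchard--Nutz \cite[Theorem~2.2]{BN13} to obtain Fatou closedness of $\Ccal$ under NA($\Pcal$), then apply Theorem~\ref{thm:5}. Your passage to $\Acal=-\Ccal$ is in fact more careful than the paper's own proof, which applies Theorem~\ref{thm:5} directly to $\Ccal$ and leaves the sign adaptation of the monotonicity hypothesis implicit (Theorem~\ref{thm:5} requires $\Acal+(L^\infty_c)_+=\Acal$, whereas $\Ccal$ satisfies $\Ccal-(L^\infty_c)_+=\Ccal$).
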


\begin{proof} \cite[Theorem 2.2 ]{BN13}   shows
that under $NA(\Pcal)$ the cone $\Ccal$ is closed under $\Pcal$-q.s. convergence of
sequences and therefore $\mathcal{C}$ satisfies (FC). We remark that \cite[Theorem 2.2]{BN13} holds in full generality
without the product structure on the underlying probability space assumed in \cite{BN13}. Therefore applying Theorem
\ref{thm:5} we deduce that $\mathcal{C}$ is
$\sigma(L^{\infty}_c,ca_c)$-closed.
\end{proof}

Suppose that $\Ccal$ is $\Pcal$-sensitive. As $\Ccal$ is a $\sigma(L^{\infty}_c,ca_c)$-closed convex cone, the bipolar Theorem yields
\begin{eqnarray}\mathcal{C} & = & \mathcal{C}^{00}=\left\{Y\in L^{\infty}_c\mid \forall Q\in \mathcal{C}^0_1: \,E_Q[ Y ] \leq 0\right\} \label{eq:bipolar}
\\ \text{ where } \mathcal{C}^0_1 & := & \mathcal{C}^{0}\cap \Mcal_1=\left\{\mu\in \mathcal{C}^{0}\mid \mu(1_{\Omega})=1\right\}\nonumber
\\ \text{ and } \mathcal{C}^0 & := & \left\{\mu\in ca_c\mid  \forall X\in \mathcal{C}: \, \int X\,  d\mu \leq 0\right\}. \nonumber
\end{eqnarray}
Notice that since $\mathcal{C}\supset -(L^{\infty}_c)_+$ then
$\mu\in (ca_c)_+$ for every $\mu\in \mathcal{C}^0$ which explains $\mathcal{C}^0_1$.

\begin{lemma}\label{lem:arb} $\mathcal{C}^0_1$ is the set of all martingale measures
dominated by the capacity $c$, that is
$$\mathcal{C}^0_1 = \{ Q\ll \Pcal \mid S \text{ is a } Q\text{-martingale}  \} $$
\end{lemma}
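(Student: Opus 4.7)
The plan is to verify the two set inclusions separately. For the inclusion ``$\subseteq$'', let $Q\in \mathcal{C}^0_1$. Then $Q \in \Mcal_1\cap ca_c$, so $Q\ll \Pcal$. To obtain the martingale property of $S$ under $Q$, I would test the defining inequality of $\mathcal{C}^0_1$ against elementary one-step strategies. Fix $t\in\{1,\ldots,T\}$, $j\in\{1,\ldots,d\}$, and any bounded $\Fcal_{t-1}$-measurable random variable $A$. The predictable strategy $H$ with $H^j_t=A$ and all other entries equal to zero gives $(H\bullet S)_T = A(S^j_t-S^j_{t-1})$, which lies in $L^\infty_c$ since $S$ is $\Pcal$-q.s.\ bounded. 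Hence $\pm A(S^j_t-S^j_{t-1})\in \mathcal{C}$, and applying the $\mathcal{C}^0_1$-inequality to both signs forces $E_Q[A(S^j_t-S^j_{t-1})]=0$. Varying $A$ over bounded $\Fcal_{t-1}$-measurable random variables yields $E_Q[S^j_t-S^j_{t-1}\mid \Fcal_{t-1}]=0$, so $S$ is a $Q$-martingale.

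For the opposite inclusion ``$\supseteq$'', fix $Q\ll \Pcal$ under which $S$ is a martingale, and pick $X\in \mathcal{C}$. Then $X\leq (H\bullet S)_T$ $\Pcal$-q.s.\ (hence $Q$-a.s.) for some $H\in \Hcal$, and setting $K:=\|X\|_{c,\infty}$ we have $(H\bullet S)_T\geq -K$ $Q$-a.s. The heart of the argument is the standard discrete-time fact that a predictable stochastic integral against a bounded $Q$-martingale whose terminal value is $Q$-a.s.\ bounded below is itself a $Q$-martingale with initial value $0$. Granted this, $E_Q[(H\bullet S)_T]=0$, and therefore $E_Q[X]\leq E_Q[(H\bullet S)_T]=0$, proving $Q\in \mathcal{C}^0_1$.

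The main technical point is this ``bounded-below implies martingale'' step in the ``$\supseteq$'' direction: since $H_t$ need not be $Q$-integrable, one cannot directly apply the ordinary take-out-what-is-known rule to $E_Q[H_t\Delta S_t\mid \Fcal_{t-1}]$. I would circumvent this by backward induction starting at $t=T$. The bound $(H\bullet S)_T\geq -K$ together with $(H\bullet S)_T=(H\bullet S)_{T-1}+H_T\Delta S_T$ yields $(H_T\Delta S_T)^-\leq K+|(H\bullet S)_{T-1}|$, so $E_Q[H_T\Delta S_T\mid \Fcal_{T-1}]$ exists in the extended sense; truncating $H_T$ on $\{|H_T|\leq n\}\in \Fcal_{T-1}$, applying the usual take-out together with $E_Q[\Delta S_T\mid \Fcal_{T-1}]=0$, and letting $n\to \infty$ gives $E_Q[H_T\Delta S_T\mid \Fcal_{T-1}]=0$ $Q$-a.s. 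Consequently $E_Q[(H\bullet S)_T\mid \Fcal_{T-1}]=(H\bullet S)_{T-1}\geq -K$, and the same argument iterates backwards so that $(H\bullet S)_t\geq -K$ and $E_Q[(H\bullet S)_t\mid \Fcal_{t-1}]=(H\bullet S)_{t-1}$ for every $t$. Integrability and $E_Q[(H\bullet S)_T]=0$ then propagate forward from $(H\bullet S)_0=0$, because a variable with vanishing conditional mean and bounded negative part is automatically integrable. This is the discrete-time analogue of the Ansel--Stricker lemma, standard in the robust FTAP literature (cf.\ \cite{BN13}), and may alternatively simply be cited.
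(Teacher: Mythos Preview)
Your proof is correct and follows the same two-inclusion structure as the paper: elementary one-step strategies for ``$\subseteq$'', and the martingale property of the discrete stochastic integral for ``$\supseteq$''. The only difference is one of detail: the paper disposes of the integrability issue in the ``$\supseteq$'' direction by a one-line appeal to generalized conditional expectations (\cite[Appendix]{BN13}), whereas you spell out the underlying backward-induction (Ansel--Stricker type) argument explicitly---which is precisely the content of that citation.
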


\begin{proof}The proof is well-known and straightforward, so we just
give the basic arguments: indeed choose any $Q\in \{ Q\ll \Pcal
\mid S \text{ is a } Q\text{-martingale}  \}$, and let $X\in
\mathcal{C}$ and $H\in \mathcal{H}$ such that $X\leq (H\bullet
S)_T$ $\Pcal$-q.s. Then $E_Q[X]\leq E_Q[(H\bullet S)_T]= (H\bullet
S)_0=0$ since $((H\bullet S)_t)_{t\in I}$ is a $Q$-martingale
(using generalized conditional expectations, see
\cite[Appendix]{BN13}). Thus $Q\in \mathcal{C}_1^0$.

\smallskip\noindent
If $Q\in\mathcal{C}_1^0$ then $E_Q[(H\bullet S)_T]=0$ for any
$H\in \mathcal{H}$ and by choosing appropriate strategies in
$\mathcal{H}$ such as
$H_t^{j}=1_A$ for  $A\in
\Fcal_{t-1}$, $H_t^i=0$ for $i\neq j$ and $H_s=0$ for $s\neq t$
one verifies that $Q$ is a martingale measure for $S$.
\end{proof}

\begin{theorem}[First Fundamental Theorem of Asset Pricing]\label{1FTAP}$\,$\\ Suppose $\Ccal$ is $\Pcal$-sensitive. The following are equivalent:

\begin{enumerate}
\item $NA(\Pcal)$
\item $\mathcal{C}^0_1\approx \Pcal$
\end{enumerate}
Moreover, the Superhedging Duality holds, that is for any $X\in L^{\infty}_c$ the minimal superhedging price \begin{equation*}
\pi(X):=\inf \left\{ x\in \mathbb{R}\mid \exists H\in \mathcal{H}\text{ s.t. }x+(H\bullet S)_{T} \geq X \,  \Pcal\text{-q.s.}\right\}
\end{equation*}
satisfies \begin{equation}\label{dual:super}
\pi(X)=\sup_{Q\in \mathcal{C}^0_1}E_{Q}[X].
\end{equation}
\end{theorem}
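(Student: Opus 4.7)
The strategy is to exploit the bipolar representation \eqref{eq:bipolar} of $\Ccal$, which is available precisely because the $\Pcal$-sensitivity assumption plus NA($\Pcal$) triggers Lemma~\ref{C-closed}, making $\Ccal$ a $\sigma(L^\infty_c,ca_c)$-closed convex cone. Combined with the martingale characterization of $\Ccal^0_1$ in Lemma~\ref{lem:arb}, the theorem reduces to essentially classical Hahn-Banach arguments, and the real work has already been done in Section~\ref{robustFatou}.

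For the implication (ii) $\Rightarrow$ (i), I would argue directly. If $\Ccal^0_1\thickapprox \Pcal$ and $(H\bullet S)_T\geq 0$ $\Pcal$-q.s., then for every $Q\in \Ccal^0_1$ the inequality holds $Q$-a.s.\ (since $Q\ll \Pcal$), while $E_Q[(H\bullet S)_T]=0$ by the $Q$-martingale property. Hence $(H\bullet S)_T=0$ $Q$-a.s.\ for every $Q\in \Ccal^0_1$, and the assumption $\Pcal\ll \Ccal^0_1$ then upgrades this to $(H\bullet S)_T=0$ $\Pcal$-q.s. The core of the reverse implication (i) $\Rightarrow$ (ii) is Hahn-Banach separation. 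The inclusion $\Ccal^0_1\ll \Pcal$ is immediate from $\Ccal^0_1\subset ca_c$. For $\Pcal\ll \Ccal^0_1$, I would fix $A\in\Fcal$ with $c(A)>0$ and first show $1_A\notin \Ccal$: any $H\in\Hcal$ with $1_A\leq (H\bullet S)_T$ $\Pcal$-q.s.\ would satisfy $(H\bullet S)_T\geq 0$ $\Pcal$-q.s., so NA($\Pcal$) would force $(H\bullet S)_T= 0$ $\Pcal$-q.s., contradicting $1_A\leq (H\bullet S)_T$ on the positive-capacity set $A$. Because $\Ccal$ is $\sigma(L^\infty_c,ca_c)$-closed (Lemma~\ref{C-closed}) and convex, Hahn-Banach yields $\mu\in ca_c$ with $\sup_{X\in \Ccal}\int X\,d\mu<\int 1_A\,d\mu$; the cone property of $\Ccal$ forces the supremum to be $0$, hence $\mu\in \Ccal^0$, while $-(L^\infty_c)_+\subset \Ccal$ forces $\mu\in (ca_c)_+$. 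Normalizing $Q:=\mu/\mu(\Omega)$ produces $Q\in \Ccal^0_1$ with $Q(A)>0$, establishing $\Pcal\ll \Ccal^0_1$.

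For the superhedging duality, set $p:=\sup_{Q\in \Ccal^0_1}E_Q[X]$, which is finite since $p\leq \|X\|_{c,\infty}$. The inequality $\pi(X)\geq p$ is routine: if $x+(H\bullet S)_T\geq X$ $\Pcal$-q.s., then for any $Q\in \Ccal^0_1$, taking $Q$-expectations and invoking the martingale property from Lemma~\ref{lem:arb} gives $x\geq E_Q[X]$, and the sup over $Q$ then yields $x\geq p$. The converse $\pi(X)\leq p$ is where \eqref{eq:bipolar} does the real work: by definition of $p$, we have $E_Q[X-p]\leq 0$ for every $Q\in \Ccal^0_1$, so the bipolar identity $\Ccal=\Ccal^{00}$ gives $X-p\in \Ccal$, which means some $H\in\Hcal$ superreplicates $X$ starting from initial wealth $p$.

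The main obstacle I anticipate is not any individual step but the fact that the entire edifice rests on the $\sigma(L^\infty_c,ca_c)$-closedness of $\Ccal$, which in turn relies essentially on the $\Pcal$-sensitivity hypothesis feeding into Theorem~\ref{thm:5} via Lemma~\ref{C-closed}. Without the Section~\ref{robustFatou} machinery, the Hahn-Banach separation used in (i) $\Rightarrow$ (ii) and the bipolar identity used for the non-trivial direction of the superhedging duality would both be unavailable, and no amount of classical pricing theory would suffice.
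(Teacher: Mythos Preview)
Your proposal is correct and follows essentially the same strategy as the paper: both rely on Lemma~\ref{C-closed} to obtain $\sigma(L^\infty_c,ca_c)$-closedness of $\Ccal$, and then exploit the resulting bipolar representation \eqref{eq:bipolar} together with Lemma~\ref{lem:arb}. The only cosmetic difference is in (i)~$\Rightarrow$~(ii): the paper argues the contrapositive directly from \eqref{eq:bipolar} (if $Q(B)=0$ for all $Q\in\Ccal^0_1$ then $1_B\in\Ccal^{00}=\Ccal$, hence $c(B)=0$ by NA($\Pcal$)), whereas you re-derive the separating measure via an explicit Hahn--Banach step---but since the bipolar theorem is precisely Hahn--Banach for closed convex cones, the two arguments are the same in substance.
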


\begin{proof}(i) $\Rightarrow$ (ii): Clearly, $c(A)=0$ implies $\sup_{Q\in \mathcal{C}^0_1}Q(A)=0$ as $\mathcal{C}^0_1\subset
ca_c$. Let $B\in \Fcal$  such that
$Q(B)=0$ for all $Q\in\mathcal{C}^0_1$. Thus $1_B\in \Ccal$ by \eqref{eq:bipolar}, so $1_B=0$ in $L^\infty_c$ by $NA(\Pcal)$, i.e.\ $c(B)=0$.

\smallskip\noindent
(ii) $\Rightarrow$ (i): let $H\in\Hcal$ such that $(H\bullet S)_T\geq 0$
$\Pcal$-q.s. Then $Q\{(H\bullet S)_T\geq 0\}=0$ for every $Q\in
\mathcal{C}^0_1$, because $(H\bullet S)_t$ is a $Q$-martingale with expectation $0$, and therefore $(H\bullet S)_T= 0$ $\Pcal$-q.s.

\smallskip\noindent
As for the Superhedging Duality note that clearly $\pi(X)\leq \|X\|_{c,\infty}$ since $0\in \Hcal$, and as $\Ccal_1^0\neq \emptyset$ ($\Ccal\neq L^\infty_c$) it follows that $\pi(X)>-\infty$. Moreover, by \eqref{eq:bipolar} we have for any $y\in \R$ that $X-y\in \Ccal$ if and only if $0\geq \sup_{Q\in \mathcal{C}^0_1}E_{Q}[X-y]= -y + \sup_{Q\in \mathcal{C}^0_1}E_{Q}[X]$ which proves \eqref{dual:super}.
\end{proof}

\begin{appendix}

\section{Auxiliary results for Theorem~\ref{thm:dual:sigmaadd}}

Recall the set $\Zcal$ defined in \eqref{Z}.

\begin{proposition}\label{prop:3}
If $\Zcal=\emptyset$, then there exists a countable subset
$\widetilde \Pcal\subset \Pcal$ such that $\widetilde
\Pcal\thickapprox \Pcal$. The latter implies that there is a probability
measure $Q\in \Mcal_1$ such that $\{Q\}\thickapprox \Pcal $.
\end{proposition}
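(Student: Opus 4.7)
The argument splits in two. The second statement reduces to the first: given a countable equivalent subfamily $\widetilde{\Pcal} = \{P_n\}_{n \in \N} \subset \Pcal$, the measure $Q := \sum_{n \geq 1} 2^{-n} P_n$ lies in $\Mcal_1$ and satisfies $\{Q\} \thickapprox \widetilde{\Pcal} \thickapprox \Pcal$, since a set is $Q$-null precisely when it is $P_n$-null for every $n$.

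The main work is producing such a $\widetilde{\Pcal}$ from $\Zcal = \emptyset$. The single place where this hypothesis enters is the observation that any pairwise disjoint sequence $\{A_k\}_{k \in \N} \subset \Fcal$ with $c(A_k) \geq \delta > 0$ leads to a contradiction: $B_m := \bigcup_{k \geq m} A_k$ is decreasing and nonempty, and $B_m \downarrow \emptyset$ because the disjointness forces $\limsup_k A_k = \emptyset$, yet $c(B_m) \geq c(A_m) \geq \delta$, so $(B_m)_{m \in \N} \in \Zcal$. Splitting by $c \geq 1/n$, this also rules out uncountable pairwise disjoint families of positive-capacity sets. From this I would extract, for every $P \in \Pcal$, a \emph{$c$-carrier} $T_P \in \Fcal$: let $\{B_i\}_{i \in I}$ be a Zorn-maximal pairwise disjoint family with $P(B_i) = 0$ and $c(B_i) > 0$; the observation forces $I$ to be countable, so $N_P := \bigcup_i B_i \in \Fcal$ is $P$-null, and $T_P := \Omega \setminus N_P$ satisfies $P(T_P) = 1$ together with $c|_{T_P} \ll P|_{T_P}$ (otherwise any $A \subset T_P$ with $P(A) = 0$ and $c(A) > 0$ could be appended to $\{B_i\}$, violating maximality).

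By a diagonal union across $\epsilon = 1/n$ it suffices, for each fixed $\epsilon > 0$, to find a countable $\widetilde{\Pcal}_\epsilon \subset \Pcal$ such that $c(A) \leq \epsilon$ whenever $A \in \Fcal$ is null for every $P \in \widetilde{\Pcal}_\epsilon$. Assuming no such $\widetilde{\Pcal}_\epsilon$ exists, I would inductively build $P_k \in \Pcal$ and pairwise disjoint carriers $T_k$ with $c(T_k) > \epsilon/2$: at step $k$ the negation of what we are trying to prove (applied to the countable set $\{P_1,\ldots,P_{k-1}\}$) yields $A \in \Fcal$ with $P_j(A) = 0$ for $j < k$ and $c(A) > \epsilon$; the carrier property forces $c(A \cap T_j) = 0$ for each $j < k$, so by countable subadditivity of $c$ we have $c\bigl(A \setminus \bigcup_{j<k} T_j\bigr) = c(A) > \epsilon$, and one picks $P_k \in \Pcal$ with $P_k\bigl(A \setminus \bigcup_{j<k} T_j\bigr) > \epsilon/2$. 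Running the carrier construction for $P_k$ inside $\bigl(\bigcup_{j<k} T_j\bigr)^c$ rather than $\Omega$ produces $T_k \subset \bigl(\bigcup_{j<k} T_j\bigr)^c$, automatically disjoint from $T_1, \ldots, T_{k-1}$, with $P_k(T_k) > \epsilon/2$ and hence $c(T_k) > \epsilon/2$. The resulting infinite disjoint sequence $\{T_k\}_{k \in \N}$ then directly contradicts the key observation with $\delta = \epsilon/2$.

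The obstacle that drives the construction is precisely this disjointification. A plain exhaustion produces witnesses $A_k$ that are null for previously chosen $P_j$ but may overlap arbitrarily, so they cannot be fed into the key observation to extract a sequence in $\Zcal$. The $c$-carrier $T_P$ is the device that converts ``$P_j$-null for $j < k$'' into ``disjoint from $T_1,\ldots,T_{k-1}$'', and crucially it is itself well defined because of the same finiteness principle for disjoint families of positive-capacity sets that we will use at the very end.
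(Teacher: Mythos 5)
Your proof is correct. It follows the same overall skeleton as the paper (fix $\varepsilon>0$, argue by contradiction to produce a decreasing sequence violating $\Zcal=\emptyset$, then diagonalize over $\varepsilon=1/n$ and take a countable mixture for $Q$), but the key device is genuinely different. The paper proves a stronger, quantitative intermediate claim: for each $\varepsilon>0$ there exist \emph{finitely many} $P_1,\dots,P_n\in\Pcal$ and a threshold $\delta>0$ such that $P_i(A)<\delta$ for all $i$ forces $P(A)<\varepsilon$ for every $P\in\Pcal$; in its contradiction argument the witnesses $A_n$ may overlap, and instead of disjointifying them it removes $N=\limsup_n A_n$, which is null for every chosen $P_i$ by a Borel--Cantelli estimate, so that the tails $E_n=\bigcup_{k\ge n}(A_k\setminus N)$ decrease to $\emptyset$ while $c(E_n)\ge\varepsilon$. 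You attack the same overlap problem by disjointification: Zorn-maximal disjoint families of $P$-null sets of positive capacity yield the carriers $T_P$, and the pairwise disjoint carriers $T_k$ with $c(T_k)>\varepsilon/2$ give the sequence in $\Zcal$ directly, since disjointness makes the tails shrink to $\emptyset$. What each approach buys: your per-$\varepsilon$ statement is weaker (a countable subfamily whose common null sets have capacity at most $\varepsilon$) yet clearly sufficient after the diagonal union, and the carrier idea is reusable; the paper's quantitative finite claim lets it write the dominating mixture $\mu=\sum_n\sum_i 2^{-n}2^{-i}P_i^{(n)}$ immediately and uses only a sequential (dependent-choice) construction, whereas you invoke Zorn's lemma (avoidable by a greedy exhaustion picking disjoint sets whose capacity is at least half the available supremum). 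Two small points to make explicit in a final write-up: the sets $B_m=\bigcup_{k\ge m}A_k$ in your key observation are nonempty (they contain $A_m$, which has positive capacity), as the definition of $\Zcal$ requires, and each maximal disjoint family is countable because for every $n$ only finitely many of its members can have capacity at least $1/n$.
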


\begin{proof}
We claim that for each $\varepsilon>0$, there exists $P_1,\ldots,
P_n\in \Pcal$ and $\delta>0$ such that $P_i(A)<\delta$ for all
$i=1,\ldots, n$ implies that for all $P\in \Pcal$ we have
$P(A)<\varepsilon$. Suppose this is not the case. Then there
exists $\varepsilon>0$ such that for any $P_1\in \Pcal$ there is
$A_1\in \Fcal$ and $P_2\in \Pcal$ satisfying $$P_1(A_1)<1/2 \quad
\mbox{and}\quad  P_2(A_1)\geq \varepsilon.$$ Then there also
exists $A_2\in \Fcal$ and $P_3\in \Pcal$ such that
$$P_1(A_2)<1/4,\, P_2(A_2)<1/4   \quad \mbox{while}\quad
P_3(A_2)\geq \varepsilon.$$ Continuing this procedure we find
sequences $(A_n)_{n\in \N}\subset \Fcal$ and $(P_{n})_{n\in \N}\in
\Pcal$ such that
$$P_i(A_n)<\frac{1}{2^n}, \, i=1,\ldots, n, \quad \mbox{and}\quad
P_{n+1}(A_n)\geq \varepsilon.$$ Consider $N:=\bigcap_{n\in
\N}\bigcup_{k\geq n} A_k$. Then $P_i(N)=0$ for each $i\in \N$,
because for all $n>(i-1)$ $$P_i(N)\leq \sum_{k=n}^\infty
P_i(A_k)\leq \frac{1}{2^{n-1}}.$$ Hence, replacing the above
sequence $A_n$ by $B_n:=A_n\setminus N$,
 $n\in \N$, we still have  $$P_i(B_n)<\frac{1}{2^n}, \, i=1,\ldots, n, \quad \mbox{and}\quad P_{n+1}(B_n)\geq \varepsilon.$$
Now let $E_n:=\bigcup_{k\geq n}B_k$, $n\in \N$. It follows that
$E_n\downarrow \emptyset$. However, for each $n\in \N$
$$c(E_n)\geq P_{n+1}(E_n)\geq P_{n+1}(B_n)\geq \varepsilon$$ which
contradicts $\Zcal=\emptyset$.

\smallskip\noindent
Now let $\delta_n>0$ and let $P^{(n)}_1,\ldots, P^{(n)}_{m(n)}\in
\Pcal$ be such that  for all $P \in
\Pcal$ it holds $P(A)<1/n$ whenever $P^{(n)}_i(A)<\delta_n$
for all $i=1,\ldots, m(n)$. Define $$\mu:=\sum_{n=1}^\infty
\sum_{i=1}^{m(n)}\frac{1}{2^n}\frac{1}{2^i}P_i^{(n)}.$$ Then
$\mu\in ca_+$, and $\mu(A)=0$ implies that $P_i^{(n)}(A)=0$ for
all $i=1,\ldots, m(n)$ and $n\in \N$. Eventually this implies that
for all $P\in \Pcal$ we have $P(A)<1/n$ for all $n\in \N$, hence
$P(A)=0$. Thus $$\widetilde \Pcal:=\{P_i^{(n)}\mid i\in
\{1,\ldots, m(n)\}, n\in \N\}\quad \mbox{and}\quad
Q:=\frac{1}{\mu(\Omega)}\mu$$ satisfy the assertion.
\end{proof}

\begin{proposition}\label{prop:1}
Let $(B,\|\cdot\|)$ be a Banach lattice of (equivalence classes
of) random variables on $(\Omega,\Fcal)$  containing all simple
random variables such that the order $\leq$ on $B$ satisfies
$0\leq 1_A\leq 1_{A'}$ whenever $A\subset A'$ for $A,A'\in\Fcal$.
If $B^\ast\subset ca$, in the sense that every $l\in B^\ast$ is of
type $$l(X)=\int X\,d\mu, \quad X\in B,$$ for some $\mu\in ca$,
then $\|1_{A_n}\|\to 0$ $(n\to \infty)$ for all $(A_n)_{n\in
\N}\subset \Fcal$ such that $A_n\downarrow \emptyset$.\\
Conversely, if $\|1_{A_n}\|\to 0$ $(n\to \infty)$ for all
$(A_n)_{n\in \N}\subset \Fcal$ such that $A_n\downarrow
\emptyset$, then for every $l\in B^\ast$ there is a $\mu\in ca$
such that $l(Y)=\int Y\, d\mu $ for all simple random variables
$Y$.
\end{proposition}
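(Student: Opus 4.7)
The proof splits into the two implications.

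For the forward direction, suppose $B^\ast \subset ca$ and let $(A_n)_{n\in \N}\subset \Fcal$ with $A_n\downarrow \emptyset$. Since $0\leq 1_{A_{n+1}}\leq 1_{A_n}$, Banach-lattice norm monotonicity gives that $(\|1_{A_n}\|)_{n\in \N}$ is non-increasing, so it suffices to derive a contradiction from $\|1_{A_n}\|\geq \delta$ for all $n$ and some $\delta>0$. My plan is to construct a single functional $\phi\in B^\ast$ satisfying $\phi(1_{A_n})\geq \delta$ for every $n$; via the representation $\phi(X)=\int X\, d\mu$ with $\mu\in ca$, this contradicts $\mu(A_n)\to 0$ forced by $\sigma$-additivity of $\mu$.

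First I would use Hahn-Banach together with the fact that $B^\ast$ is itself a Banach lattice to select $l_n\in B^\ast$ with $l_n\geq 0$, $\|l_n\|^\ast\leq 1$, and $l_n(1_{A_n})\geq \delta$: pick any norming $\tilde l_n$ with $\tilde l_n(1_{A_n})=\|1_{A_n}\|$ and set $l_n:=\tilde l_n^+$, noting $\|l_n\|^\ast\leq \||\tilde l_n|\|^\ast=\|\tilde l_n\|^\ast$ and $l_n(1_{A_n})=\tilde l_n^+(1_{A_n})\geq \tilde l_n(1_{A_n})$. Then fix a Banach limit $\mathrm{Lim}$ on $\ell^\infty(\N)$ and define $\phi(X):=\mathrm{Lim}\bigl((l_k(X))_{k\in\N}\bigr)$; since $|l_k(X)|\leq \|X\|$ for all $k$, $\phi$ is a well-defined linear functional on $B$ with $\|\phi\|^\ast\leq 1$. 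For each fixed $n$, positivity of $l_k$ combined with $1_{A_n}\geq 1_{A_k}$ for $k\geq n$ yields $l_k(1_{A_n})\geq l_k(1_{A_k})\geq \delta$ for all $k\geq n$; since a Banach limit of an eventually-$\geq \delta$ sequence is itself $\geq \delta$, I obtain $\phi(1_{A_n})\geq \delta$, completing the contradiction.

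For the converse, given $l\in B^\ast$ I would set $\mu(A):=l(1_A)$ for $A\in \Fcal$. Finite additivity follows from $1_{A\cup B}=1_A+1_B$ for disjoint $A,B$ and linearity of $l$. The hypothesis $\|1_{A_n}\|\to 0$ whenever $A_n\downarrow\emptyset$, combined with $|\mu(A_n)|\leq \|l\|^\ast \|1_{A_n}\|$, yields continuity of $\mu$ at $\emptyset$ from above, hence $\sigma$-additivity of $\mu$. Since any real-valued $\sigma$-additive set function on a $\sigma$-algebra automatically has finite total variation (by Hahn-Jordan decomposition), $\mu\in ca$. Finally, $l(Y)=\int Y\, d\mu$ for every simple random variable $Y=\sum_i c_i 1_{A_i}$ is immediate from linearity of both sides.

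The main obstacle is the construction of $\phi$ in the forward direction. A naive averaging such as $\sum_{n} 2^{-n} l_n$ would only guarantee the bound $\delta \cdot 2^{-n+1}$ at $1_{A_n}$, which collapses to zero; any finite convex combination fails for the same tail-dilution reason. The Banach-limit construction (equivalently, a weak-$\ast$ cluster point of $(l_n)$ in the unit ball of $B^\ast$ obtained via Alaoglu's theorem) is essential precisely because it preserves the uniform lower bound $\delta$ across all $n$ simultaneously, and this is where the argument genuinely uses more than routine Banach-space manipulation.
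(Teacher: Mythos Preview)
Your proof is correct. The converse direction matches the paper's argument essentially verbatim. The forward direction, however, takes a genuinely different route.

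The paper argues directly rather than by contradiction: from $B^\ast\subset ca$ it observes that $1_{A_n}\to 0$ in $\sigma(B,B^\ast)$, so $0$ lies in the weak closure of $\operatorname{co}\{1_{A_n}:n\in\N\}$, hence in its norm closure by Mazur's theorem. This produces convex combinations $c_k\to 0$ in norm, and since each $c_k$ dominates the indicator of the smallest set appearing in it, the lattice-norm inequality $\|1_{A_{n_{m(k)}(k)}}\|\leq \|c_k\|$ forces a subsequence---and therefore the whole monotone sequence---$\|1_{A_n}\|$ to tend to $0$. Thus the paper exploits the lattice structure of $B$ itself (the pointwise domination $c_k\geq 1_{A_{n_{m(k)}(k)}}$), whereas you exploit the lattice structure of $B^\ast$ (taking $l_n=\tilde l_n^{+}$) together with a Banach limit / Alaoglu cluster point to manufacture a single $\phi\in B^\ast$ that violates $\sigma$-additivity. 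Your approach is perhaps more in the spirit of ``produce a bad functional,'' and the Banach-limit step neatly sidesteps the tail-dilution problem you correctly identify; the paper's approach is slightly more elementary in that Mazur is a one-line consequence of Hahn--Banach separation and no transfinite aggregation of the $l_n$ is needed. Both arguments rely essentially on monotonicity $1_{A_{n+1}}\leq 1_{A_n}$, just on opposite sides of the duality.
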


\begin{proof}
Suppose that $B^\ast\subset ca$ and let $(A_n)_{n\in \N}\subset
\Fcal$ such that $A_n\downarrow \emptyset$. Then $1_{A_n}\to 0$
with respect to $\sigma(B,B^\ast)$ since every element in $B^\ast$
corresponds to a $\sigma$-additive measure. Hence, $$0\in
\overline{co\{1_{A_n}\mid n\in \N\}}$$ where the closure is taken
in the $\sigma(B,B^\ast)$-topology. As the closed convex set in
the $\sigma(B,B^\ast)$-topology and in the norm topology coincide,
we have that there is a sequence of convex combinations
$$c_k:=\sum_{i=1}^{m(k)}a_i(k)1_{A_{n_i(k)}}, \quad k\in \N,$$
where $a_i(k)\in \R$ and $n_1(k)\leq n_2(k)\leq \ldots \leq
n_{m(k)}(k)$ for all $k\in \N$ such that $\|c_k\|\to 0$ for $k\to
\infty$. Moreover, since $0\in\overline{co\{1_{A_n}\mid n\geq
N\}}$ for any $N\in \N$, we may assume that $n_1(k)\leq n_1(k+1)$
for all $k\in \N$. However, $c_k\geq 1_{A_k}$ where
$A_k=A_{n_{m(k)}(k)}$, because $A_n\supset A_{n+1}$ for all
$n\in\N$. Thus, as $\|\cdot\|$ is a lattice norm, the subsequence
$1_{A_ k}$ converges to $0$ in norm and hence also $1_{A_{n}}$
converges to $0$ in the norm topology (again due to $A_n\supset
A_{n+1}$ for all $n\in\N$).

\smallskip\noindent
Finally suppose that $\|1_{A_n}\|\to 0$ $(n\to \infty)$ for all
$(A_n)_{n\in \N}\subset \Fcal$ such that $A_n\downarrow
\emptyset$. Then for any $l\in B^\ast$, the set function
$$\mu(A):=l(1_A), \quad A\in \Fcal,$$ is $\sigma$-additive. By
linearity of $l$ we deduce that $l(X)=\int X\, d\mu$ for all
simple random variables $X$.
\end{proof}

\section{Penot--Volle Duality Theorem}\label{sec:PenotVolle}
\begin{theorem}(see e.g.\ \cite[Theorem 1.1]{FM11})
\label{Volle1}Let $L$ be a locally convex topological vector space, $%
L^{\prime }$ be its dual space and $f:L\rightarrow \overline{\mathbb{R}}:=%
\mathbb{R}\cup \left\{ -\infty \right\} \cup \left\{ \infty
\right\} $ be quasiconvex and lower semicontinuous. Then
\begin{equation}
f(X)=\sup_{X^{\prime }\in L^{\prime }}R(X^{\prime }(X),X^{\prime
}) \label{111}
\end{equation}%
where $R:\mathbb{R\times }L^{\prime }\rightarrow
\overline{\mathbb{R}}$ is defined by
\begin{equation}
R(t,X^{\prime }):=\inf_{\xi \in L}\left\{ f(\xi )\mid X^{\prime
}(\xi )\geq t\right\} . \label{112}
\end{equation}
\end{theorem}
\end{appendix}

\end{document}